\newcommand{\R}{\mathbb{R}}
\newcommand{\N}{\mathbb{N}}
\newcommand{\Sph}{\mathbb{S}}
\newcommand{\eps}{\varepsilon}
\newcommand{\dif}{\mathrm{d}}
\newcommand{\vol}{\mathrm{Vol}}
\newcommand{\coan}{C^{0,\alpha}_{\nu - 2}}
\newcommand{\ctan}{C^{2,\alpha}_\nu}
\newcommand{\ckan}{C^{k,\alpha}_\nu}
\newcommand{\riem}{\mathrm{Rm}}
\newcommand{\ric}{\mathrm{Ric}}
\newcommand{\fullapsol}{\tilde \Sigma_r(\Gamma, \tau, W, \Xi)}
\newcommand{\apsol}{\tilde \Sigma_r}
\DeclareMathOperator{\arccosh}{arccosh}
\newtheorem{thm}{Theorem}
\newtheorem{lemma}[thm]{Lemma}
\newtheorem{prop}[thm]{Proposition}
\newtheorem*{nonumthm}{Theorem}
\theoremstyle{definition}
\newtheorem{defn}[thm]{Definition}
\newtheoremstyle{rmk}{5pt}{5pt}{}{}{\scshape}{:}{.5em}{}
\theoremstyle{rmk}
\newcommand{\mylabel}
	{\label}
\begin{document}

\title{CMC Surfaces in Riemannian Manifolds \\
Condensing to a Compact Network of Curves} 

\author{Adrian Butscher \\  Stanford University \\ Department of Mathematics \\ email: butscher@stanford.edu}

\maketitle

\begin{abstract}
A sequence of constant mean curvature surfaces $\Sigma_j$ with mean curvature $H_j \rightarrow \infty$ in a three-dimensional manifold $M$ condenses to a compact and connected graph $\Gamma$ consisting of a finite union of curves if $\Sigma_j$ is contained in a tubular neighbourhood of $\Gamma$ of size $\mathcal O(1/H_j)$ for every $j \in \N$.  This paper gives sufficient conditions on $\Gamma$ for the existence of a sequence of compact, embedded constant mean curvature surfaces condensing to $\Gamma$.  The conditions are: each curve in $\gamma$ is a critical point of a functional involving the scalar curvature of $M$ along $\gamma$; and each curve must satisfy certain regularity, non-degeneracy and boundary conditions.  When these conditions are satisfied, the surfaces $\Sigma_j$ can be  constructed by gluing together small spheres of radius $2/H_j$ positioned end-to-end along the edges of $\Gamma$.
\end{abstract}

\renewcommand{\baselinestretch}{1.25}
\normalsize

\section{Introduction}
\label{sec:intro}

\paragraph{Background.}  A constant mean curvature (CMC) hypersurface $\Sigma$ contained in an ambient (for simplicity three-dimensional) compact Riemannian manifold $M$ has the property that its mean curvature with respect to the induced metric is constant.  This property ensures that area of $\Sigma$ is a critical value of the area functional for surfaces of $M$ subject to an enclosed-volume constraint.  The study of CMC surfaces in the classical setting of $M = \R^3$ is a well established field of Riemannian geometry and the literature concerning the construction and properties of such surfaces is enormous.  Indeed, it is known that there is a very great flexibility in the construction of CMC surfaces and that also a number of interesting rigidity results hold as well.

Examples of such rigidity results are as follows.  In the particular case of finite-topology surfaces, Meeks \cite{meeks} proved that any end of a complete Alexandrov-embedded CMC surface in $\R^3$ is cylindrically bounded; while Korevaar, Kusner and Solomon \cite{kks} improved this by showing that any such end converges exponentially to one end of a Delaunay surface.  Furthermore, the possible directions of the axes of these ends are also subject to limitations, as well as the flexibility to change these directions within  the moduli space of all such surfaces, see \cite{gbkkrs,gks,gks2}.  These limitations are phrased in terms of a certain \emph{flux} that was discovered by Kusner. The flux is a vector associated to any closed loop in a CMC surface; it is constant under deformations of this loop, and in fact only depends on the homology class of this loop in the surface. There is a flux associated to a simple positively oriented loop around each asymptotically Delaunay end, which depends only the direction of the axis and the neck-size of the limiting Delaunay surface. The homological invariance also shows that the sum over all ends of these limiting fluxes must vanish, which is a global balancing condition for the entire CMC surface. One immediate consequence is the non-existence of a complete Alexandrov-embedded CMC surface in $\R^3$ with only one end. Finally, Korevaar and Kusner \cite{korevaarkusner1} have a very general structure theorem for these types of CMC surfaces which states that the ends are as described above while the remainder of the surface is contained in a tubular neighbourhood of a graph consisting of line segments and a balancing formula holds at the nodes of the graph. 

The corresponding picture in general Riemannian manifolds is considerably less well-developed.  Rather than focus on special cases (such as when $M$ is the sphere or hyperbolic space, where there are many interesting results) we will work in an unspecified Riemannian manifold but consider small CMC surfaces of high mean curvature.  In this setting, Rosenberg \cite{rosenberg} has shown that if $\Sigma$ is a closed CMC surface in an arbitrary (compact) $3$-manifold $M$, with sufficiently large mean curvature $H$, then $M \setminus \Sigma$ has two components, and the inradius at any point in one of these components is bounded above by $C/H$. In other words, $\Sigma$ is localized in a small tubular neighbourhood of $\Gamma$.  This phenomenon is captured via the following definition.

\begin{defn} A sequence of constant mean curvature surfaces $\Sigma_j$ in  $M$ with mean curvature $H_j \rightarrow \infty$ \emph{condenses} to  $\Gamma$ if $\Sigma_j$ is contained in a tubular neighbourhood of $\Gamma$ of size $\mathcal O(1/H_j)$ for every $j \in \N$.
\end{defn}

\noindent We are led to the following two central and very natural questions.  

\begin{itemize}
	\item What are the possible condensation sets $\Gamma$ for condensing sequences of CMC surfaces $\Sigma_j$?
	
	\item If $\Sigma_j $ condenses to $ \Gamma$ then what can be said about the nature of $\Sigma_j$ for large $j$?
\end{itemize}

\noindent As an example of the kind of answers we hope to provide, let us consider a sequence of dilations of a complete CMC surface with $k$ asymptotically Delaunay ends in $\R^3$, where the dilation parameter goes to zero.  Such a sequence condenses onto a one-dimensional set, here a union of half-lines meeting  at a point where the unit vectors defining the half-lines satisfy the flux balancing condition described above (which specifically means that the sum of these unit vectors, weighted by a factor relating to the Delaunay parameter of the corresponding end, vanishes).  

An obvious hope is that this Euclidean example represents the local behaviour of condensing sequences of CMC surfaces in general Riemannian manifold; i.e.\ that in a Riemannian manifold $M$, a condensation set $\Gamma$ is some sort of network of geodesics, where each edge of this geodesic network to have a `weighting' which carries information about the Delaunay parameters of the CMC tubular piece which converges to that edge, and that the nodes of this network are balanced.   There is in fact some evidence that this is true, in the form of a result of Mazzeo and Pacard \cite{mazzeopacardtubes}.  They show that if $\Gamma$ is any closed, non-degenerate geodesic in $M$, then most geodesic tubes of sufficiently small radius about $\Gamma$ can be perturbed into CMC surfaces with large $H$ (thus providing an example of a sequence of CMC surfaces condensing to $\Gamma$.  Furthermore, they show that if $\Sigma_j$ condenses onto a smooth one-dimensional manifold $\Gamma$ and the pointwise ratio of the norm of the second fundamental form to the mean curvature is bounded above independently of $j$, then $\Gamma$ must be a geodesic.

Despite this compelling evidence, however, it seems that the actual characterization of condensing sequences of CMC surfaces in $M$ is potentially far more complex and interesting.  To understand the reason why this is so, it is helpful to review the first result about CMC surfaces of high mean curvature.  This result is due to Ye \cite{ye} in the early 1990s and relates to sequences of CMC surfaces collapsing to a single point, so that $\Gamma = \{ p \}$ for some $p$ in $M$.  Ye's result has two components.  First, he proves that if $p$ is a non-degenerate critical point of the scalar curvature of $M$, then geodesic spheres around $p$ with small radius may be perturbed into CMC surfaces with large $H$.  Second, he also proves conversely that if a sequence of CMC spheres with $H \to \infty$ forms a local foliation with uniformly bounded pointwise ratio of the norm of the second fundamental form to the mean curvature, then this sequence converges to a critical point of the scalar curvature.  

The conclusion to be drawn from Ye's work is that the scalar curvature of $M$ may be involved in the behaviour of condensing sequences of CMC surfaces in some way.  The question now becomes if the scalar curvature of $M$ influences the behaviour of sequences of CMC surfaces condensing to a one-dimensional variety.    Given Mazzeo and Pacard's result \cite{mazzeopacardtubes} cited above, we should only expect such an influence if $\Gamma$ is a non-smooth variety or else  the pointwise ratio of the norm of the second fundamental form to the mean curvature of $\Sigma_j$ diverges on at least one sequence of points $p_j \in \Sigma_j$.  

Butscher and Mazzeo \cite{memazzeo} have recently shown that it is indeed possible to construct a condensing sequence of CMC surfaces where the condensation set is contrary to what one would expect in the Euclidean case.  Namely, they construct surfaces that locally resemble a Delaunay surface but are either compact or have one end.   It should be noted that their construction depends critically on the nature of $M$ --- they assume that $M$ is cylindrically symmetric (about a geodesic $\gamma$) and very rapidly asymptotically Euclidean.  They use the \emph{gluing technique} 
\footnote{The gluing technique is a well-known technique used in geometric analysis.  It goes back to the work of Kapouleas \cite{kapouleas7,kapouleas6} in the context of CMC surfaces and has been further developed by many other researchers. See \cite{mazzeosurvey,pacardsurvey} for surveys about the current  state of this approach}
to construct their surfaces: a collection of  $\mathcal O(L/r)$ small geodesic spheres of radius $r$ are positioned end-to-end along $\gamma$ and glued to each other using re-scaled, embedded catenoids to produce a compact, almost-CMC surface of length $\mathcal O(L)$.  A semi-infinite embedded Delaunay surface can also be glued to one of the terminal spheres of this configuration to produce a one-ended almost-CMC surface.  These surfaces are then perturbed into true CMC surfaces by solving a partial differential equation.  The sizes of the catenoidal necks employed in the construction vary along the length of $\gamma$ in a manner governed by a flux formula that  implies that the difference between successive neck-sizes can be expressed in terms of the gradient of the scalar curvature along the axis connecting these two necks.  Moreover, these neck sizes are quite small compared to the radii of the spheres --- the pointwise ratio of the norm of the second fundamental form to the mean curvature diverges precisely in the neck regions of these surfaces.  

The condensation sets in Butscher and Mazzeo's work above are very simple: a geodesic segment on the one hand and a geodesic ray on the other.   The reason for this is the highly restrictive symmetry conditions satisfied by $M$, which in particular forces the gradient of the scalar curvature to point along $\gamma$.  One might thus hope for more interesting behaviour and more radical departures from the Euclidean picture if these symmetry conditions are weakened.  In this paper, we consider a completely general (though compact) ambient manifold $M$ containing a union of curves $\Gamma$.  We identify a set of sufficient conditions on the curves in $\Gamma$ and on the endpoints of these curves that must hold in order for the construction of a sequence of embedded CMC surfaces condensing to $\Gamma$ to be possible via the gluing technique.  These conditions, expressed in greater detail below, show that the scalar curvature affects the curves in $\Gamma$ as well as the balancing formul\ae\ satisfied at the endpoints of these curves.  In particular, these curves are in general neither geodesics nor integral curves of the gradient of the scalar curvature.

\paragraph{Results.}  Consider a graph $\Gamma$ in $M$ with edges $\mathcal E := \{ \gamma_1, \ldots, \gamma_E\}$  and vertices $\mathcal V := \{ p_1, \ldots, p_N\}$.  We will assume that the vertices are found amongst the endpoints of the edges and never amongst the interior points of the edges.  Furthermore, if there are two edges $\gamma_e$ and $\gamma_{e'}$ emanating from $p_i$ then the one-sided tangent vectors of $\gamma_e$ and $\gamma_{e'}$ at $p_i$ are not co-linear.   We will also assume that each $\gamma_e$ is parametrized by arc-length and we will also use the mapping $ t \mapsto \gamma_e(t)$ to denote this parametrization.  We now introduce four \emph{existence conditions} on the curves in $\Gamma$.

\begin{enumerate}
	
	\item Each $\gamma_e \in \mathcal E$ is smooth with geodesic curvature uniformly bounded independently of $r$.

	\item For each $\gamma \in \mathcal E$ and all sufficiently small $r$, there is a smooth, positive function $f: \gamma \rightarrow \R$ so that the pair $(\gamma, f)$ satisfies the equation
	\begin{subequations}
	\begin{equation}
		\label{eqn:curveeqn}
		\nabla_{\dot \gamma} \big( f \dot \gamma \big) := \Omega r^2 \nabla R \circ \gamma
	\end{equation}
	where $\Omega$ is a geometric constant and $R$ is the scalar curvature function of $M$.  
	
	We note here that \eqref{eqn:curveeqn} is actually equivalent to a system of two equations that we can obtain by expanding and projecting parallel and perpendicular to $\dot \gamma$.  That is,
\begin{equation}
	\label{eqn:curveeqntwo}
	\begin{aligned}
	f \nabla_{\dot \gamma} \dot \gamma &= \Omega r^2 \big( \nabla R \circ \gamma \big)^\perp \\
	\frac{\dif f }{\dif t} &= \Omega r^2 \langle \nabla R \circ \gamma , \dot \gamma \rangle  \, .
\end{aligned}
\end{equation}
Since $\langle \nabla R \circ \gamma , \dot \gamma \rangle  = \frac{\dif  }{\dif t} \big( R \circ \gamma \big)$ the second equation implies that there is a constant $c$ so that $f(t) = \Omega r^2 ( R\circ \gamma(t) + c )$.  Hence it is necessary only to solve the first of these two equations.
\end{subequations}
		
	\item The following boundary conditions hold for the various pairs $(\gamma, f)$.
	\begin{subequations}
	\label{eqn:curvebdconds}
	\begin{itemize}
		\item For each $p \in \mathcal V$ such that there is only one edge $\gamma \in \mathcal E$ emanating from $p$, we let $f$ be the function corresponding to $\gamma$ and assume without loss of generality that $\gamma$ is parametrized to begin at $p$.  Then
		\begin{equation}
			f(0) = 0 \qquad \mbox{as well as} \qquad \dot \gamma(0) \; \| \; \nabla R (p) \quad \mbox{and} \quad \langle \dot \gamma(0) , \nabla R(p) \rangle < 0 \, .
		\end{equation}
		
		\item For each remaining $p \in \mathcal V$, we let $\gamma_1, \ldots, \gamma_K \in \mathcal E$ be the $K \geq 2$ edges emanating from $p$ and we assume without loss of generality that these curves are parametrized to begin at $p$.  If $f_1, \ldots, f_K$ are the corresponding functions, then
		\begin{equation}
			\sum_{i=1}^K \Big(  f_i(0) + \Omega r^3 \langle \nabla R(p), \dot \gamma_i(0) \rangle \Big) \dot \gamma_i(0) = \Omega r^3 \nabla R (p) \, .
		\end{equation}
	\end{itemize}
	\end{subequations}
	
	\item Let $\Gamma_t$ be a continuous one-parameter family of graphs such that $\Gamma_t \rightarrow \Gamma$ where we mean the standard notion of convergence for one-dimensional varieties.  Then each curve $\gamma_t \in \Gamma_t$ with $\gamma_t \rightarrow \gamma \in \Gamma$ carries an infinitesimal deformation vector field $X_{\gamma}$.   We will also assume that no $X_{\gamma}$ belongs to the kernel of the linearization of the operator given in \eqref{eqn:curveeqn} corresponding to $\gamma$.

\end{enumerate}

The main theorem that will be proven in this paper is that a graph $\Gamma$ satisfying the conditions (1) -- (4) above is the condensation set of a sequence of CMC surfaces.

\begin{nonumthm}
	Let $\Gamma$ be a graph satisfying conditions (1) -- (4) above.  Then there is a sequence $r_j \rightarrow 0$ and a family of CMC surfaces $\Sigma_j$ where each $\Sigma_j$ has mean curvature $2/r_j$ and is contained in a tubular neighbourhood of radius $2 r_j$ of $\Gamma$.  
\end{nonumthm}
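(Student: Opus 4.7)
The plan is to construct $\Sigma_j$ by the gluing technique. For a sequence $r_j \to 0$, I would first build an approximate CMC surface $\tilde\Sigma_{r_j}$ depending on several parameters — positions of sphere centers along $\Gamma$, catenoidal neck widths $\tau_k$, and auxiliary gauge data — and then perturb it into a genuine CMC surface. The approximate surface is assembled by partitioning each edge $\gamma_e$ into segments of length $\sim 2 r_j$, placing a geodesic sphere of radius $r_j$ at each partition point, and joining consecutive spheres with a rescaled embedded catenoid of waist size $\tau_k \ll r_j$. At each vertex $p \in \mathcal V$, the endpoints are handled either by a terminal sphere (when only one edge emanates from $p$) or by joining the emanating edges through a shared vertex sphere with suitably chosen neck data. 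In a weighted Hölder norm adapted to the two length scales $r_j$ and $\tau_k$, the error $H(\tilde\Sigma_{r_j}) - 2/r_j$ can be estimated: on each sphere it is controlled by the ambient Ricci and scalar curvature along $\gamma$, while on each neck, rescaling produces a Euclidean catenoid (mean curvature zero) plus Riemannian corrections of order $r$.

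The core of the argument is the linear analysis. The Jacobi operator $L = \Delta + |A|^2 + \ric(\nu,\nu)$ on $\tilde\Sigma_{r_j}$ carries an approximate kernel generated by infinitesimal translations of each sphere together with infinitesimal dilations of each neck. A Lyapunov--Schmidt reduction produces a finite-dimensional map $\mathcal F$ of the geometric parameters whose vanishing, to leading order in $r$, is equivalent to the existence of a true CMC perturbation. I expect the interior component of $\mathcal F$ along each edge to reproduce the curve equation \eqref{eqn:curveeqn}: the translation balance between consecutive spheres yields \eqref{eqn:curveeqntwo}, while the flux comparison between consecutive necks yields the integrated relation $f(t) = \Omega r^2 (R \circ \gamma(t) + c)$ for the neck-size function. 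Similarly, the terminal components of $\mathcal F$ reproduce the vertex conditions \eqref{eqn:curvebdconds}. Condition (4) then supplies invertibility of the linearization of $\mathcal F$, uniformly in $r$, and a Banach contraction argument in the weighted space produces $\Sigma_j$ as a normal graph over $\tilde\Sigma_{r_j}$ for all sufficiently small $r_j$.

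The main obstacle I expect is the linear step. Because of the scaling mismatch between sphere regions (scale $r_j$) and neck regions (scale $\tau_k \ll r_j$), where $|A|/H \to \infty$, the weighted Hölder spaces must be carefully designed so that sphere-model and catenoid-model parametrices can be patched across the gluing zones to yield a bounded right inverse for $L$ on the approximate co-kernel. Extracting the precise form of the finite-dimensional reduction $\mathcal F$ from this parametrix, and verifying that its vanishing is equivalent to conditions (2)--(3) at exactly the correct order in $r$, is what couples the analytic ingredients to the geometry of $\Gamma$ through the scalar curvature $R$. The non-colinearity hypothesis on tangent vectors at each vertex should ensure that the vertex-balance equations can be solved in the parameters $\tau_k$ without degenerating, and condition (1) on uniformly bounded geodesic curvature is what keeps the spheres from being distorted beyond the regime in which the sphere parametrix is valid.
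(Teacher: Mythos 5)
Your proposal follows essentially the same strategy as the paper --- gluing spheres and catenoids along $\Gamma$, carrying out a Lyapunov--Schmidt reduction in weighted H\"older spaces, and showing that the vanishing of the resulting finite-dimensional reduced map encodes conditions (2)--(3) at leading order in $r$ while condition (4) supplies the non-degeneracy needed to kill the remaining error. However, there is one genuine gap in the construction as you describe it. You propose to use plain geodesic spheres as the building blocks, but these cannot be glued to catenoid necks accurately enough: near a gluing point a geodesic sphere is quadratic in the radial graph variable while a catenoid end grows logarithmically, so the mismatch in the transition region would contribute an error to $H[\tilde\Sigma_r] - 2/r$ that is too large for the contraction step. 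The paper's building blocks are instead \emph{perturbed} spheres, normal graphs over $\Sph^2$ by a Green's function $G_{\vec q, \vec\eps}$ solving $\Delta_{\Sph^2}G + 2G = \sum_i \eps_i \delta_i + J$; the logarithmic singularity of $G$ at each attachment point $q_i$ is precisely what matches the catenoid's logarithmic end, and the amplitude $\eps_i$ is what later becomes the neck-size function $f$ in Condition (2). Without this Kapouleas-style preparation of the spheres, the mean curvature estimate of Proposition \ref{prop:meancurvest} fails and the fixed-point argument does not close.

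A second, smaller point: the paper splits the approximate co-kernel and the reduced map into two qualitatively different pieces. The neck-level components (a six-dimensional family per neck coming from translations, rotations, and dilation of the catenoid, projected onto the functions $\eta^\pm_{q^\flat}$ and $\ell^\pm_{q^\flat,s}$) give a square linear system \eqref{eqn:neckbalsolution} with an invertible matrix $\widehat M_{is}$, so those equations can always be solved for the gauge parameters $\Xi$ with no constraint on $\Gamma$. Only the sphere-level translation components $\pi_{q,s}$ actually constrain the geometry, and it is there that $\nabla R$ enters via the expansion \eqref{eqn:geomcexp} and Conditions (2)--(4) come into play. Your proposal blends these together; separating them is what lets the argument reduce cleanly to the stated existence conditions on $\Gamma$.
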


The proof of this result can be outlined as follows.  We use the standard machinery of gluing constructions, which constructs the CMC surfaces condensing onto $\Gamma$ in several steps.  First, we position spherical building blocks (well-chosen small perturbations of spheres) of radius $r$ end-to-end along each of the edges in $\Gamma$ and glue them to each other by means of optimally matched, re-scaled and truncated catenoids.   This yields an approximately CMC surface $\tilde \Sigma_r (\Gamma)$. We also construct neighbouring approximately CMC surfaces whose spherical constituents and necks are displaced by arbitrary small amounts.  Next, for each of these approximately CMC surfaces, we solve the constant mean curvature equation up to a remainder term in the approximate co-kernel of the linearized mean curvature operator of $\tilde \Sigma_r (\Gamma)$.  We then choose $\Gamma$ so that the highest-order term (in powers of $r$) of the remainder vanishes --- which is known as finding a \emph{balanced} initial approximately CMC surface.  Finally, we vary this $\Gamma$ slightly in order to find a surface for which the remainder term vanishes identically.

The key feature of the proof above is that Conditions (1) -- (4) given above translate into the equations that must be solved in order to find the graph $\Gamma$ that yields a balanced initial approximately CMC surface.   To explain this in more detail, we must first describe how the balancing equations are derived.  First let us consider a CMC surface $\Sigma$ with mean curvature $h$ in $(M, g)$ and suppose that $\mathcal U$ and $\mathcal W$ are open sets in $\Sigma$ and $M$, respectively, such that $\partial \bar{\mathcal W} = \bar{\mathcal U} \cup Q$ for some surface-with-boundary $Q$. The first variation formula for the area of $\mathcal U$ with the volume of $\mathcal W$ fixed relative to the one-parameter family of diffeomorphisms $\phi_t$ generated by a vector field $V$ on $M$ then implies 
\begin{align}
	\mylabel{eqn:balancing}
	0 &= \left. \frac{\dif}{\dif t} \Big( \mathit{Area} \big( \phi_t (\Sigma \cap \bar{ \mathcal U} ) \big) - h \mathit{Vol} \big( \phi_t (\bar{ \mathcal W}) \big) \Big) \right|_{t=0} -  \int_{\partial \mathcal U} g(\nu, V) + h \int_Q g(N, V) \, ,
\end{align}
where $\nu$ is the unit normal vector field of $\mathcal V$ in $\Sigma$ and $N$ is the unit normal vector field of $Q$ in $M$.  Next we consider the case when $\Sigma$ has approximately constant mean curvature, as is the case for our approximate solutions $\Sigma_r(\Gamma)$.  It turns out that the perturbation argument for finding a nearby exactly CMC surface works only when the right hand side of \eqref{eqn:balancing} is sufficiently close to zero for all vector field $V$ in the approximate co-kernel of the linearized mean curvature operator.  Since $\Sigma_r(\Gamma)$ for sufficiently small $r$ consists of spherical pieces joined by small necks, these vector fields are the approximate translation vector fields (defined as the coordinate vector fields of a  normal coordinate chart centered at a spherical piece) multiplied by cut off functions that cause them to vanish on all but one of the spherical pieces.  Evaluating the right hand side of \eqref{eqn:balancing} on such a vector field corresponding to the spherical piece centered at $p \in \Gamma$ yields
\begin{equation}
	\label{eqn:balancingtwo}
	\mathit{r.h.s.} = 
\sum_{\mbox{\scriptsize all necks}} r \eps_i \langle \eta_i , V \rangle - \Omega r^{4} \langle \nabla R (p), V \rangle + \mathcal O(r \eps^2) + \mathcal O(r^{6})
\end{equation}
where $\eta_i$ is the unit vector pointing along the geodesic from the center of the spherical piece in question to the $i^{\mathrm{th}}$ neck that connects this spherical piece to its neighbours, $r \eps_i$ is the width of this neck and $\eps := \max_i \{ \eps_i \}$.  Also, $\nabla R(p)$ is the gradient of the scalar curvature of $M$ at $p$, the brackets $\langle \cdot, \cdot \rangle$ denote the ambient metric at $p$ and $\Omega$ is a constant independent of $r$.  

In order to find $\Gamma$ so that the right hand side of \eqref{eqn:balancingtwo} vanishes identically, we proceed as follows.  Let $\overline{\mathcal V} $ denote the set of centers of all the spherical pieces used in the construction of $\Sigma_r(\Gamma)$.   We first find $\Gamma$ so that the leading order terms in \eqref{eqn:balancingtwo} all vanish, namely that the equations
\begin{equation}
	\label{eqn:balancingthree}
	0= \sum_{\mbox{\scriptsize all necks}}  \eps_i  \eta_i - \Omega r^{3} \nabla R (p)
\end{equation}
hold for all $ p \in \overline{\mathcal V}$.  We then perturb $\Gamma$ slightly so that the remaining small errors can be made to vanish as well.  This latter step requires a \emph{non-degeneracy condition}:  that the map which takes graphs near $\Gamma$ to values of the right hand side of equation \eqref{eqn:balancingtwo} be locally surjective.  

The requirements for the procedure above can be translated directly into Conditions (1) -- (4) of the Main Theorem.  To do so, consider the family of equations of the form \eqref{eqn:balancingthree} corresponding to the interior of a given curve $\gamma \subseteq \Gamma$.  Dividing through by $r$, this equation reads
\begin{equation}
	\label{eqn:balancingfour}
	0= \frac{1}{r} \big( \eps^+ \eta^+ + \eps^- \eta^- \big) - \Omega r^{2} \nabla R (p)
\end{equation}
where $\eta^\pm$ are the unit vectors of pointing in the direction of the neck ahead $(+)$ and behind $(-)$ the point $p$ on the curve $\gamma$ while $\eps^\pm$ are the corresponding neck sizes.   We can now view this equation as the discretization of a differential equation.  Up to additional small error terms, we have $\eta^\pm = \pm \dot \gamma(p \pm r)$ and if we introduce a neck-size function $f : \gamma \rightarrow \R$ then we have $\eps^\pm = f(p \pm r)$.  Hence the right hand side of \eqref{eqn:balancingfour} is equal to 
$$\nabla_{\dot \gamma} \big( f \dot \gamma \big) - \Omega r^2 \nabla R (p)$$
up to additional small error terms.  The vanishing to highest order in $r$ of \eqref{eqn:balancingfour} is thus equivalent to Condition (2) of the Main Theorem. Condition (1) is now the regularity condition which ensures that the various estimates of $\Sigma_r(\Gamma)$ that are needed in the perturbation theory remain uniform.  Condition (3) is obtained by expressing the equations that must hold at the boundaries of all the curves in $\Gamma$ as boundary conditions for the the pairs $(\gamma , f)$.  Finally, Condition (4) is the required non-degeneracy condition.

\paragraph{Comments on the existence conditions.}  The task of finding a non-trivial network of curves $\Gamma$ satisfying the existence conditions above is of course still open.   (The curve used in Butscher- Mazzeo \cite{memazzeo} should be seen as a trivial solution of these conditions because there the gradient of the scalar curvature points along the curve.) We will not attempt to solve in any general way the existence conditions in this paper.  Instead, we will point out a key feature of the equation \eqref{eqn:curveeqn} which should serve as a starting point for the investigation of the existence and properties of its solutions.

\begin{prop}
	Curves satisfying equation \eqref{eqn:curveeqn} are critical points of the functional
	$$\gamma \mapsto \int_\gamma \Big( \| \dot \gamma \|^2 + \Omega^2 r^4 ( R \circ \gamma + c )^2 \Big)$$
	for some constant $c$.
\end{prop}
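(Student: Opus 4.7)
The plan is to compute the Euler--Lagrange equation of the functional
\begin{equation*}
\mathcal{F}(\gamma) := \int_\gamma \Bigl( \|\dot\gamma\|^2 + \Omega^2 r^4 (R\circ\gamma + c)^2 \Bigr)
\end{equation*}
by standard calculus of variations, and then verify that any arc-length parametrized solution of the system \eqref{eqn:curveeqntwo}, after a specific reparametrization, satisfies it. The one mild subtlety is that $\mathcal{F}$ is not reparametrization-invariant, so the correspondence between its critical points and the arc-length parametrized solutions of \eqref{eqn:curveeqntwo} is necessarily mediated by a change of variable.

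For the first variation, let $\gamma_s$ be a smooth variation of $\gamma$ with fixed endpoints and let $V = \partial_s \gamma_s \big|_{s=0}$. The torsion-freeness of the Levi--Civita connection gives $\frac{\dif}{\dif s}\|\dot\gamma_s\|^2\big|_{s=0} = 2\langle \nabla_{\dot\gamma} V,\dot\gamma\rangle$, and of course $\frac{\dif}{\dif s}(R\circ\gamma_s + c)^2\big|_{s=0} = 2(R\circ\gamma + c)\langle \nabla R, V\rangle$. Integrating the first expression by parts yields
\begin{equation*}
\frac{\dif}{\dif s}\bigg|_{s=0}\mathcal{F}(\gamma_s) = -2\int \bigl\langle \nabla_{\dot\gamma}\dot\gamma - \Omega^2 r^4(R\circ\gamma + c)\,\nabla R,\; V \bigr\rangle\,\dif t,
\end{equation*}
so the Euler--Lagrange equation is $\nabla_{\dot\gamma}\dot\gamma = \Omega^2 r^4(R\circ\gamma + c)\,\nabla R$.

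For the second step, suppose $\gamma(t)$ is arc-length parametrized and solves \eqref{eqn:curveeqntwo} with companion function $f = \Omega r^2(R\circ\gamma + c)$, as already derived in the paper by integrating the tangential component. Introduce a new parameter $\tau$ by $\dif t/\dif \tau = f$ and set $\tilde\gamma(\tau) = \gamma(t(\tau))$, so that $\tilde\gamma' = f\dot\gamma$. The covariant product rule gives
\begin{equation*}
\nabla_{\tilde\gamma'}\tilde\gamma' = f\dot f\,\dot\gamma + f^2\,\nabla_{\dot\gamma}\dot\gamma.
\end{equation*}
The perpendicular component of \eqref{eqn:curveeqntwo} supplies $f\nabla_{\dot\gamma}\dot\gamma = \Omega r^2 (\nabla R)^\perp$, while the tangential component supplies $\dot f = \Omega r^2 \langle \nabla R,\dot\gamma\rangle$, so $\dot f\,\dot\gamma$ is precisely $\Omega r^2$ times the component of $\nabla R$ parallel to $\dot\gamma$. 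Summing the two contributions,
\begin{equation*}
\nabla_{\tilde\gamma'}\tilde\gamma' = \Omega r^2 f\,(\nabla R)^\perp + \Omega r^2 f\,\langle \nabla R,\dot\gamma\rangle\dot\gamma = \Omega r^2 f\,\nabla R = \Omega^2 r^4 (R\circ\gamma + c)\,\nabla R,
\end{equation*}
which is exactly the Euler--Lagrange equation for $\tilde\gamma$. Hence $\gamma$, viewed up to reparametrization, is a critical point of $\mathcal{F}$ with the same constant $c$.

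The only real obstacle is bookkeeping: one must identify the integration constant $c$ produced by the tangential equation in \eqref{eqn:curveeqntwo} with the constant $c$ appearing in the functional, and keep straight the two parametrizations $t$ (arc length) and $\tau$ (the critical-point parameter). Both points are cosmetic, and the proposition reduces to the two routine computations above.
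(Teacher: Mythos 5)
Your proof is correct and takes essentially the same route as the paper's: the paper multiplies \eqref{eqn:curveeqn} by $f$, recognizes the left-hand side as $\nabla_{\dot\sigma}\dot\sigma$ under the reparametrization $\dot\sigma = f\dot\gamma$, and identifies the result as the Euler--Lagrange equation, which is exactly your chain $\nabla_{\tilde\gamma'}\tilde\gamma' = f\dot f\,\dot\gamma + f^2\nabla_{\dot\gamma}\dot\gamma = \Omega^2 r^4(R\circ\gamma+c)\nabla R$. You simply spell out the two sub-steps (deriving the Euler--Lagrange equation by first variation and recombining the split system \eqref{eqn:curveeqntwo}) that the paper compresses into one line, and you helpfully flag the non-reparametrization-invariance of the functional, which the paper leaves implicit.
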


\begin{proof}
We multiply both sides of \eqref{eqn:curveeqn} by $f$ and re-parametrize $\gamma$ so that $f \dot \gamma := \dot \sigma$ and hence $\| \dot \sigma \| = f =  \Omega r^2 ( R\circ \gamma + c )$.  This yields the equation 
$$\nabla_{\dot \sigma} \dot \sigma = \Omega r^2 \| \dot \sigma \| \nabla R \circ \gamma = \Omega^2 r^4 \big( R \circ \gamma + c \big) \nabla R \circ \gamma = \Omega^2 r^4 \nabla \big( R \circ \gamma + c \big)^2$$
which is precisely the Euler-Lagrange equation of the given functional.
\end{proof}

\paragraph{Acknowledgements.} I would like to thank Rafe Mazzeo, Frank Pacard,  Harold Rosenberg and Richard Schoen for interesting discussions during the course of this work.  I would also like to thank Harold Rosenberg for his generous hospitality during my stay at IMPA in Brazil in May 2009.

\section{Constructing a Family of Initial Surfaces}

Let $\Gamma$ be a graph as defined in Section \ref{sec:intro} with edges $\mathcal E := \{ \gamma_1, \ldots, \gamma_E\}$  and vertices $\mathcal V := \{ p_1, \ldots, p_N\}$. We now show how to construct a family of initial surfaces based on $\Gamma$ by positioning small slightly deformed geodesic spheres of radius $r$ end-to-end along the edges in $\mathcal E$, and gluing these to each other by inserting small catenoids.  Once a member of this family of surfaces is chosen, we also show how to construct a family of perturbations of this surface by allowing the locations of its spherical constituents to vary.  The construction procedure can be explained in several steps.

\paragraph{Step 1: The building blocks.}  We first construct building block surfaces in $\R^3$.  Identify $\Sph^2$ with the unit sphere centered at the origin in $\R^3$ and let $J_s := x^s \big|_{\Sph^2}$ be the restriction of the $s^{\mathit{th}}$ coordinate function to $\Sph^2$.  Set $\mathcal K_{\Sph^2} := \mathit{span} \{ J_1, J_2, J_3 \}$.  Choose a collection of points $q_1, \ldots, q_n  \in \Sph^2$ and small parameters $\eps_1, \ldots, \eps_n \in \R_+$ (which we'll abbreviate here by $\vec q$ and $\vec \eps$\,) and define the function $G_{\vec q, \vec \eps} : \Sph^2 \setminus \{ q_1, \ldots, q_n \} \rightarrow \R$ as the unique solution of the distributional equation 
\begin{equation}
	\label{eqn:greensdef}
	\Delta_{\Sph^2} G_{\vec q, \vec \eps} + 2 G_{\vec q, \vec \eps} = \sum_{i=1}^n \eps_i \delta_i + J \qquad \mbox{and} \qquad G_{\vec q, \vec \eps} \perp_{L^2} \mathcal K_{\Sph^2}
\end{equation}
where $\delta_i$ is the Dirac $\delta$-mass at $p_i$, while $\Delta_{\Sph^2}$ is the Laplacian of $\Sph^2$ with respect to the induced metric and $J \in \mathcal K$ is such that the right hand side of \eqref{eqn:greensdef} is $L^2$-orthogonal to $\mathcal K_{\Sph^2}$, thereby guaranteeing the existence of $G$.  Now let $\eps=\max\{ \eps_1, \ldots, \eps_n \}$ and let $r_\eps:= \eps^{3/4}$ (the reason for this choice will become clear in Step 2 below).  Define
$$\mathring S_r[ \vec q, \vec \eps] := \left\{ r (1 + G_{\vec q, \vec \eps}(\theta) ) \theta : \theta \in \Sph^2 \setminus \bigcup_{i=1}^n B_{r_\eps} (q_i) \right\}$$ 
which is the normal graph over $\Sph^2 \setminus \bigcup_{i=1}^n B_{r_\eps} (q_i) $ generated by the function $G_{\vec q, \vec \eps}$.  

We now transplant these building blocks to $M$.  Choose a point $p \in M$ and an orthonormal frame $E := \{ E_1, E_2, E_3 \}$ at $p$ from which we construct the inverse of the geodesic normal coordinate map $\phi_{p, E}^{-1} : \R^3 \rightarrow M$ via by $\phi_{p, E}^{-1} (x) := \exp_p (x^1 E_1 + \cdots  x^3 E_3)$ for $x = (x^1, x^2, x^3) \in B_R(0)$ where $R$ is some radius smaller than the injectivity radius of $M$ at $p$.  This allows us to obtain a building block in $M$ via the prescription
$$S_r[ p,  E, \vec q, \vec \eps] :=  \phi_{p, E}^{-1} (\mathring S_r[ \vec q, \vec \eps]) \, .$$
Finally, let us refer to the function $G_{\vec q, \vec \eps}$ as the \emph{generating function} of the building block $S_r[ p,  E, \vec q, \vec \eps]$.

\paragraph{Step 2: Gluing two building blocks together.}

Let us suppose that we are given one of the building blocks defined above, say $S := S_r[ p,  E, \vec q, \vec\eps]$ with $\vec q = (q_1, \ldots, q_n)$ and $\vec \eps = (\eps_1, \ldots, \eps_n)$.  We  now show how this building block can be glued to a second building block at the boundary component of $S$ corresponding to a chosen $q_i$.  First, view $q_i$ as a unit vector in $T_pM$ by means of the identification $q_i := \sum_{j=1}^3 q_i^j E_j$ and parametrize the geodesic emanating at $p$ in the direction $q_i$ by $\sigma_i (t) := \exp_p(t q_i)$.  Choose the point $p' := \sigma_i( (2+ \tau) r)$ where $\tau >0$ is a parameter satisfying $\tau = \mathcal O( \eps \log(1/\eps))$. Next, choose an orthonormal frame $E' := \{ E_1', E_2', E_3' \}$ at $p'$ such that $E_1' = - \dot \sigma_i((2+ \tau) r)$.  Finally, let $S' := S_r [p', E', \vec q^{\, \prime} ,\vec \eps^{\, \prime}]$ where $\vec q^{\, \prime} := (q_1', \ldots, q_{n'}')$ has $q_1' := (1, 0, 0)$ while $q_2', \ldots, q_{n'}'$ and $\vec \eps^{\, \prime} := ( \eps_1', \ldots, \eps_{n'}')$ are at this point arbitrary.

Let $p^\flat := \sigma_i ( ( 1 + \tau/2) r) $ denote the point half-way between $p$ and $p'$ and let $V^\flat := \dot \sigma_i (( 1 + \tau/2) r)$.  Choose an orthonormal frame at $p^\flat$ of the form $E^\flat := \{ V^\flat, E_2^\flat, E_3^\flat \}$ and consider the image of a neighbourhood of $p^\flat$ that contains both $S$ and $S'$ under the geodesic normal coordinate mapping $\phi_{p^\flat, E^\flat}$.  Re-scaling the image of this neighbourhood by a factor of $1/r$, we find that  $p^\flat$ is mapped to the origin, the geodesic segment $\sigma_i$ is mapped to the $x^1$-axis and the images of $S$ and $S'$, near the origin when $r$ is small, can be represented as two graphs over the $(x^2, x^3)$-plane of the form 
$$ \{ ( x^1, x^2, x^3) : x^1 = F_{\mathit{sph}}( \| (x^2, x^3)  \|)\} \qquad \mbox{and} \qquad  \{ ( x^1, x^2, x^3) : x^1 = F_{\mathit{sph}}' ( \|  (x^2, x^3) \|) \}$$
respectively.  One can check that the Taylor series expansions of the generating functions of $S$ and $S'$ near their singular points imply expansions for $F_{\mathit{sph}}$ and $F_{\mathit{sph}}'$ near zero of the form
\begin{align*}
	F_{\mathit{sph}}'(\bar x) &= \frac{\tau}{2 } + \eps_1'  \big( c' + C' \log (\bar x) \big) + \mathcal O(\bar x^2) + \mathcal O(|\eps'| \bar x^2)\\
	F_{\mathit{sph}}( \bar x) &= -\frac{\tau}{2 } - \eps_i \big( c + C \log ( \bar x ) \big) + \mathcal O( \bar x ^2) + \mathcal O(\eps \bar x^2) 
\end{align*}
for $\bar x > 0$.  Here, $c, c', C, C'$ are constants.
 
We will now show how to construct a six-parameter family of interpolating surfaces between the images of $S$ and $S'$ by gluing $S$ and $S'$ together using rotations, translations and re-scalings of a standard catenoid (i.e.\ the cylindrically symmetric, two-ended minimal surface in $\R^3$).  We first show how to construct the ``optimal'' member of this family.  To begin, consider a catenoid with its axis of symmetry along the $x^1$-axis, scaled by a factor of $\eps^\flat$, and translated by an amount $d^\flat$ along this axis, where $\eps^\flat$ and $d^\flat$ are yet to be determined.  The $x^1 > 0$ end of this catenoid is given by
\begin{align*}
	x^1 = F_{\mathit{neck}}^{ \eps^\flat, d^\flat, +}  (x^2, x^3)  :=&\; \eps^\flat \arccosh(  \| (x^2, x^3) \| / \eps^\flat) + \eps^\flat d^\flat \\
	=&\; \eps^\flat  \big( \log(2) - \log(\eps^\flat ) \big) + \eps^\flat \log( \| (x^2, x^3) \|) + \eps^\flat d^\flat + \mathcal O( (\eps^\flat)^3 / \| (x^2, x^3)\|^2)
\end{align*}
near the origin; and the $x^1 < 0$ end of this catenoid is given by
\begin{align*}
	x^1 = F_{\mathit{neck}}^ {\eps^\flat, d^\flat, -}( x^2, x^3)  :=&\; - \eps^\flat \arccosh(  \| (x^2, x^3) \| / \eps^\flat) + \eps^\flat d^\flat \\
	=&\; - \eps^\flat  \big( \log(2) - \log(\eps^\flat ) \big) - \eps^\flat \log( \| (x^2, x^3) \|) + \eps^\flat d^\flat + \mathcal O( (\eps^\flat)^3 / \| \bar x\|^2) 
	\end{align*}
near the origin.  Optimal matching of the asymptotic expansions of the ends of the catenoid with the asymptotic expansions of the images of $S$ and $S'$ then requires that the equations
\begin{equation}
	\label{eqn:gluingmatch}
	\eps_i = \frac{\eps^\flat}{C} \qquad \quad
	\eps_{1}' = \frac{\eps^\flat}{C'} \qquad \quad
	d^\flat = \frac{1}{2} \left( \frac{c'}{C'} - \frac{c}{C}\right) \qquad \mbox{and} \qquad  \tau = \Lambda(\eps^\flat) 
\end{equation}
hold, where
\begin{equation}
	\label{eqn:neckseprel}
	\Lambda (\eps^\flat) :=  \eps^\flat \left( 2 \big( \log(2) - \log(\eps^\flat) \big) -  \frac{c'}{C'} - \frac{c}{C} \right) \, . 	
\end{equation}
These equations imply that $\eps_i$ and $\tau$ together determine the parameters of the optimally matched neck $\eps^\flat$ and $d^\flat$ as well $\eps_1'$.  In other words, the building block $S$ and the spacing $\tau$ determines the parameters of the neighbouring building block $S'$ at the point of connection with $S$ as well as the neck interpolating between $S$ and $S'$.  Note also that the matching between the images of $S$ and $S'$ and the neck defined by this choice of parameters is \emph{most} optimal in the region of the $(x^2, x^3)$-plane where the error quantity $\mathcal O(\| (x^2, x^3) \|^2) + \mathcal O( (\eps^\flat)^3 / \| (x^2, x^3) \|^2)$ is smallest.  It is easy to check that this occurs when $\| (x^2, x^3) \| = \mathcal O (\max (\eps^{3/4}, (\eps')^{3/4} ) )$.  To complete the gluing process, we define a smooth, monotone cut-off function $\chi: [0,\infty) \rightarrow [0,1]$ which equals one in $[0,\frac{1}{2}]$ and vanishes outside $[0,1]$. Next, we define the functions $\tilde F^\pm : B_1(0) \rightarrow \R$ by
\begin{equation}
	\label{eqn:interpfn}
	\tilde F^\pm (x^2, x^3)  := \chi (\| (x^2, x^3) \| / (\eps^\flat)^{3/4}) F_{\mathit{neck}}^{\eps^\flat, d^\flat, \pm} (x^2, x^3 ) + \big( 1 - \chi(\| (x^2, x^3) \| / (\eps^\flat)^{3/4}) \big) F_{\mathit{sph}}^\ast (x^2, x^3)
\end{equation}
where $F_{\mathit{sph}}^\ast = F_{\mathit{sph}}$ if $\ast $ is $ -$ and $F_{\mathit{sph}}^\ast = F_{\mathit{sph}}' $ if $\ast $ is $+$.  Now define the interpolating surface as
\begin{align*}
	\tilde N &:= \phi_{p^\flat, V^\flat}^{-1} \Big( \big\{ ( \tilde F^+(x^2, x^3), x^2, x^3) : \eps^\flat \leq \| (x^2, x^3) \| \leq (\eps^\flat)^{3/4} \} \\
	&\qquad \qquad \qquad \qquad \cup  \{ ( \tilde F^-(x^2, x^3), x^2, x^3) : \eps^\flat \leq \| (x^2, x^3) \| \leq (\eps^\flat)^{3/4}  \big\} \Big) \, .
\end{align*}
The surface obtained by gluing $S$ to $S'$ can therefore now be defined as $S \cup \tilde N \cup S'$.  

Next, we explain how to construct interpolating surfaces close to the optimal one we have just defined, where the choice of catenoid neck can vary in a six-parameter family of choices.  Let $N$ denote the unit-scale catenoid with its axis of symmetry along the $x^1$ axis and centered on the origin.  Then the catenoid used in the optimal interpolation is thus $\eps^\flat \big(  N + (d^\flat, 0, 0) \big)$.  Let $R_{\theta_2, \theta_3}$ be the rotation by an angle $\theta_2$ about the $x^2$-axis followed by a rotation by angle $\theta_3$ about the $x^3$-axis.  Let $\eps \in \R$ be a small real number and let $\vec d := (d_1, d_2, d_3) \in \R^3$ be a vector of small norm.  Define the catenoid $N[d_1, d_2, d_3, \theta_2, \theta_3, \eps] := (1 + \eps) \eps^\flat R_{\theta_2, \theta_3} \big( N + (d^\flat, 0, 0) + \vec d \,\, \big)$. If $|\eps| + |\theta_2| + |\theta_3| + \| \vec d \|$ is sufficiently small, then the $x^1>0$ and $x^1<0$ ends of $N[d_1, d_2, d_3, \eps, \theta_2, \theta_3]$ can still be written as graphs over the $(x^2, x^3)$-plane.  We then define $\tilde N[d_1, d_2, d_3, \eps, \theta_2, \theta_3]$ as the interpolating surface obtained by replacing the functions $F_{\mathit{neck}}^{\eps^\flat, d^\flat, \pm}$ by the graphing functions of $N[d_1, d_2, d_3, \theta_2, \theta_3, \eps] $ in the definition of equation \eqref{eqn:interpfn}.  We get our six-parameter family of glued surfaces $S \cup  \tilde N[d_1, d_2, d_3, \eps, \theta_2, \theta_3] \cup S'$ in this way.

\paragraph{Step 3: Constructing the initial surface.} We first produce a family of piecewise-geodesic approximations of $\Gamma$ that are parametrized by a collection of small, positive separation parameters $\tau_e^{(s)}$ associated to each $\gamma_e \in \mathcal E$.   We can do this inductively as follows.  Let $\gamma_e \in \mathcal E$ and $\tau_e^{(1)}, \ldots, \tau_e^{(S_e)}$ be given and let $q_e^{(0)} = \gamma_e(0)$ be the vertex at the beginning of $\gamma_e$.  Now determine $q_e^{(1)}$ as the unique point in $\gamma_e$ located a distance $(2 + \tau_e^{(1)}) r$ from $q_e^{(0)}$.  Then determine $q_e^{(2)}$ as the unique point in $\gamma_e$ located a distance $(2 + \tau_e^{(2)}) r$ from $q_e^{(1)}$.  Proceed in this way until we reach the vertex at the end of $\gamma_e$, namely $q_e^{(S_e)} = \gamma_e (|\gamma_e|)$ where $|\gamma_e|$ is the length of $\gamma_e$.   Note that the last separation parameter $\tau_e^{(S_e)}$ is a function of $|\gamma_e|$ and $r$ and $\tau_i^{(s)}$ for $s = 1, \ldots, S_e - 1$ and furthermore, if $\tau_e^{(S_e)}$ is to be of size $o(r)$ then we must choose $r$ belonging to a sequence of small intervals accumulating at zero and a corresponding diverging sequence of integers $S_e = S_e(r)$.

We can complete the construction of the initial surface based on the piecewise-geodesic approximation found above.  We need only position the appropriate building block at each $q_e^{(s)}$ and connect them with the appropriate necks in order to end up with a smooth surface that we will call $\tilde \Sigma_r(\tau)$ with $\tau$ indicating the dependence on the separation parameters chosen above.  Indeed, if $q_e^{(s)}$ is attached to neighbours $q_i$ by geodesic segments of length $(2 + \tau_i)r$ for $i = 1, \ldots, n$,  then we use the building block $S_r(q_e^{(s)}, q_1, \ldots, q_n, \Lambda_1(\tau_1) / C_1, \ldots, \Lambda_n (\tau_n)/C_n )$ where $\Lambda_i$ and $C_i$ are the quantities appearing in \eqref{eqn:gluingmatch} corresponding to $q_i$. The separation parameters also determine the optimal interpolating catenoids as we have seen.

\paragraph{Step 4: Constructing perturbations of the initial surface.} Let $\overline{\mathcal V} := \{ q_1, \ldots, q_N \}$ be the union of all points in all edges of $\Gamma$ where we have placed building blocks during the construction of $\tilde \Sigma_r(\tau)$ in Steps (1) -- (3) above.  Note that $\mathcal V \subset \overline{\mathcal V}$. For each $q \in \overline{\mathcal V}$ introduce a tangent vector $W_q \in T_q M$  of small length and denote by $W  \in \prod_{q \in \overline{\mathcal V}} T_q M$ the quantity $W := (W_{q_1}, \ldots )$.  Form a new piecewise-geodesic approximation of $\Gamma$ as follows.  First move each $q \in \overline{\mathcal V}$ to $\exp_q(r W_q)$.  Then for each pair $q$ and $ q' \in \overline{\mathcal V}$ that are connected by a geodesic segment in the piecewise-geodesic approximation of $\Gamma$ constructed in Step 3, we connect the corresponding points $\exp_q(r W_q)$ and $ \exp_{q'}(r f_{q'})$ by a geodesic segment.  Now we can proceed as in Step (3) and place building blocks at each point $\exp_q(r W_q)$ for $q \in \overline{\mathcal V}$, where the various parameters of this building block are determined by the new direction vectors and the lengths of the geodesic segments attached to $\exp_q(r W_q)$.  For each $q^\flat \in \overline{\mathcal{V}}^{\, \flat}$, introduce $\Xi_{q^\flat} \in \R^6$ of small length and denote by $\Xi\in \prod_{q^\flat \in \overline{\mathcal V}^{\, \flat}} \R^6$ the quantity $\Xi := (\Xi_{q_1^\flat}, \ldots ) $.  Now glue the building blocks adjoining the edge containing $q^\flat$  together using the interpolating catenoid whose parameters are given by the components of $\Xi_{q^\flat}$.  Once we've completed the process that we have just described, we end up with a smooth surface that is a small perturbation of the surface constructed in Step 3.  Denote this surface by  $\fullapsol$.

\section{Deforming an Initial Surface into an Almost-CMC Surface} 

Let $\apsol:= \fullapsol$ be a fixed, initial surface constructed as above.  In this section of the paper, we show how to deform  $\apsol$ into an \emph{almost-CMC} surface by solving a partial differential equation using a contraction mapping argument in a space of $C^{2, \alpha}$ functions on $\apsol$.  This means that we will be able to find a small normal deformation of $\apsol$ that is determined by the solution of the PDE which yields a surface whose mean curvature is the constant $2/r$ plus a small, well-controlled error term.  The arguments required here are all fairly standard; in particular, they are carried out in full detail in \cite{memazzeo} in the special case where $\Gamma$ consists of one curve and the ambient manifold $M$ possesses a high degree of symmetry.  Hence the arguments will be stated here in an abbreviated sense for the convenience of the reader.

\subsection{Function Spaces and Norms} 

\paragraph{Partitions of unity.}  We define once and for all a family of partitions of unity for $\apsol$ that will be used throughout the paper.  In what follows, let $\overline{\mathcal V}$ be the set of end-points of all the geodesics segments used in the construction of $\apsol$ as above, and also let $\overline{\mathcal V}^{\, \flat}$ be the set of mid-points of all these geodesic segments --- which is where the various necks have been placed.  Let $\eps_{q^\flat}$ be the scale parameter used in the definition of the neck placed at $q^\flat \in \overline{\mathcal V}^{\, \flat}$ and put $\eps = \max \{\eps_{q^\flat} : q^\flat \in \overline{\mathcal V}^{\, \flat} \}$.  We begin by defining subsets of $\apsol$.  First, for any $p \in M$ and $0< \sigma < \sigma_0$ for some fixed, small threshold $\sigma_0$, we define $\mathit{Ann}_\sigma (p) := B_\sigma(p) \setminus \overline{B_{\sigma/2}(p)}$.  Now let $q^\flat \in \overline{\mathcal V}^{\, \flat}$ and define the subsets:
\begin{align*}
	\mathcal N_{q^\flat}^{\, \sigma} := \apsol \cap B_\sigma (q^\flat) \qquad \mbox{and} \qquad \mathcal T_{q^\flat}^{\, \sigma} &:= \apsol \cap \mathit{Ann}_\sigma(q^\flat) 
\end{align*}
and write $\mathcal T_{q^\flat}^{\, \sigma} := \mathcal T_{q^\flat}^{\, \sigma, +} \cup \mathcal T_{q^\flat}^{\, \sigma, -}$ where $\mathcal T_{q^\flat}^{\, \sigma, \pm}$ are the two disjoint components of $\mathcal T_{q^\flat}^{\, \sigma}$ `ahead' and `behind' the point $q^\flat$ with respect to the parametrization of the edge to which $q^\flat$ belongs.  If $\sigma_0$ is sufficiently small, then the set $\apsol \setminus \bigcup_{q^\flat \in \overline{\mathcal V}^{\, \flat}} \! \Big( \overline{ \mathcal N_{q^\flat}^{\, \sigma} \cup \mathcal T_{q^\flat}^{\, \sigma} } \Big)$ is the disjoint union of open sets corresponding to each $q \in \overline{\mathcal V}$.  In this way we can unambiguously define $\mathcal S_{q}^{\, \sigma}$ via the prescription
$$ \bigcup_{q \in \overline{\mathcal V}} \mathcal S_{q}^{\, \sigma} := \apsol \setminus \bigcup_{q^\flat \in \overline{\mathcal V}^{\, \flat}} \! \Big( \overline{ \mathcal N_{q^\flat}^{\, \sigma} \cup \mathcal T_{q^\flat}^{\, \sigma} } \Big)  \, . $$

We now define the smooth, positive cut-off functions making up the partition of unity subordinate to the cover of $\Sigma_r(W)$ given by the sets $\mathcal N_{q^\flat}^{\, \sigma}$, $\mathcal T_{q^\flat}^{\, \sigma}$ and $\mathcal S_{q}^{\, \sigma}$ for one fixed $\sigma < \sigma_0$.  Given $q^\flat \in \overline{\mathcal V}^{\, \flat}$ and $q \in \overline{\mathcal V}$ connected to neck regions centered at $q_i^\flat$ for $i = 1, \ldots, n$, we define 
\begin{align*}
	\chi^{\sigma}_{\mathit{neck}, q^\flat} (x) &:= 
	\begin{cases}
		1 &\quad \! x \in \mathcal N^{\, \sigma}_{q^\flat}  \\
		\mbox{\small $\mathit{Interpolation}$} &\quad\! x \in  \mathcal T_{q^\flat}^{\, \sigma} \\
		0 &\quad\! \mbox{elsewhere}
	\end{cases} \\[1ex]
	\chi^{\sigma}_{\mathit{sph}, q} (x) &:= 
	\begin{cases}
		1 &\quad\! x \in \mathcal S^{\, \sigma}_q \\
		\mbox{\small $\mathit{Interpolation}$} &\quad\! x \in \bigcup_{i=1}^n \mathcal T_{q_i^\flat}^{\, \sigma, -} \\
		0 &\quad\! \mbox{elsewhere}	
	\end{cases}	
\end{align*}
with the property that $\sum_{q \in \overline{\mathcal V}} \chi^{\sigma}_{\mathit{sph}, q} + \sum_{q^\flat \in \overline{\mathcal V}^{\, \flat}} \chi^{\sigma}_{\mathit{neck}, q^\flat} = 1$ for all $\sigma$.

\paragraph{Weighted norms.}  We introduce the standard weighted norm that achieves control of functions in the neck regions and asymptotic ends of $\apsol$. We first define a weight function $\zeta_{r} : \apsol \rightarrow \R$ as follows.  For each $q^\flat \in \overline{\mathcal V}^{\, \flat}$, choose $\sigma(q^\flat) := \mathcal O( r \eps_{q^\flat} \log(1/|\eps_{q^\flat}|) )$ on the order of the width of the neck regions at $q^\flat$.  Let $\delta_{\mathit{neck}, q^\flat} :  \mathcal N_{q^\flat}^{\, \sigma(q^\flat)} \rightarrow \R$ be such that $\delta_{\mathit{neck}, q^\flat} (x)$ gives the distance of a point $x \in \mathcal N_{q^\flat}^{\,  \sigma(q^\flat)}$ to the narrowest part of $\mathcal N_{q^\flat}^{\,  \sigma(q^\flat)}$.    We now define the weight function by
\begin{equation*}
	\zeta_{r}(p) := 
	\begin{cases}
		\sqrt{ \eps_{q^\flat}^2 +  [\delta_{\mathit{neck}, q^\flat}(x) ]^2}  &\quad x \in \mathcal N_{q^\flat}^{\, \sigma(q^\flat)} \\
		\mbox{\small $\mathit{Interpolation}$} &\quad x \in \mathcal T_{q^\flat}^{\, \sigma(q^\flat)} \\
		r &\quad \mbox{elsewhere} 
	\end{cases}
\end{equation*}
where the interpolation is such that $\zeta_r$ is smooth and monotone in the region of interpolation, and has appropriately bounded derivatives. 

Next we define the norms we will use.  First, if $\mathcal U$ is any open subset of $\apsol$ and $A$ is any tensor field on $\mathcal U$, define 
\begin{equation*}
	| A |_{0, \mathcal U} := \sup_{x \in \,  \mathcal U} \Vert A(x) \Vert \\
	\qquad \mbox{and} \qquad [A]_{\alpha, \, \mathcal U} := \sup_{x,x' \in \, \mathcal U} \frac{\Vert A(x') - \Xi_{x,x'} (A(x)) \Vert}{\mathrm{dist}(x,x')^\alpha} \, ,
\end{equation*}
where the norms and the distance function that appear are taken with respect to the induced metric of $\apsol$, while $\Xi_{x,x'}$ is the parallel transport operator from $x$ to $x'$ with respect to this metric.  Here we use the convention that $| \cdot |_{0, \mathcal U} = [ \cdot ]_{\alpha, \mathcal U} = 0 $ if $\mathcal U = \emptyset$.  For any function $f: \mathcal U \rightarrow \R$ we now define its $\ckan$ norm by
\begin{align*}
	|f|_{\ckan(\mathcal U)} &:= \sum_{i=0}^{k}  | \zeta_r^{i-\nu} \nabla^i f |_{0, \, \mathcal U } + [ \zeta_r^{k + \alpha -\nu} \nabla^k f ]_{0, \, \mathcal U }  \, .
\end{align*}

\paragraph{Function spaces.} The function spaces that will be used in the remainder of the paper are simply the usual spaces $C^{k, \alpha}(\apsol)$, but endowed with the $\ckan$ norm defined above. This space will be denoted $\ckan (\apsol)$.  The norm will often be abbreviated by $| \cdot |_{\ckan}$ when there is no cause for confusion.

\subsection{Setting Up a Contraction Mapping Problem}
\label{sec:strategy}

Let $\mu : \ctan (\apsol) \rightarrow \mathit{Emb}( \apsol, M)$ be the exponential map of $\apsol$ in the direction of the unit normal vector field of $\apsol$ with respect to the backgroung metric $g$.  Hence $ \mu_{rf} \big(\apsol \big)$  is the scaled normal deformation of $\apsol$ generated by $f \in \ctan (\apsol)$.  The equation
\begin{equation}
	\label{eqn:tosolve}
	H \big[ \mu_{rf} \big(\apsol \big) \big]  = \frac{2}{r}
\end{equation}
selects $f \in \ctan(\apsol)$ so that $\mu_{rf}( \apsol)$ has constant mean curvature equal to $\frac{2}{r}$.    At this stage, it will not be possible to solve \eqref{eqn:tosolve} exactly, but using a contraction mapping argument together with the choice $\nu \in (1, 2)$ the equation \eqref{eqn:tosolve} can be solved up to a \emph{co-kernel error term}.  This means that a solution of
$$H \big[ \mu_{rf} \big(\apsol \big) \big]  = \frac{2}{r} + \mathcal E$$
can be found, where $\mathcal E$ belongs to a subspace of functions associated to the approximate co-kernel of the linearized mean curvature operator of $\apsol$ that will be denoted $\tilde{\mathcal K}$ and specified below.  At first glance, the error $\mathcal E$ will come from terms in the solution procedure that are not sufficiently small.  However, the true reason for the presence of $\mathcal E$ is geometric and will be explained in Section \ref{sec:balancing}, where we show how to eliminate it by a suitable choice of $W$. 

We now outline how this will be done.  First we decompose the mean curvature operator of the deformed surface as 
$$H [  \mu_{rf} \big(\apsol \big)] = H [  \apsol ] + \mathcal L (rf) + \mathcal Q(rf) + \mathcal H(rf)$$
where $\mathcal L := \Delta + \| B \|^2$ is the dominant part of the linearized mean curvature operator, $\mathcal Q$ is the quadratic remainder part of the mean curvature and $\mathcal H (rf) := \mathit{Ric} \big|_{\mu_{rf} (\apsol)}  (N_f, N_f)$ is an additional small error term.  Then we construct a bounded parametrix  $\mathcal R : \coan (\apsol) \rightarrow \ctan (\apsol)$ satisfying $ \mathcal L \circ \mathcal R = \mathit{Id} + \mathcal E$ where $\mathcal E$ maps into $\tilde{\mathcal K}$.  Now the \emph{Ansatz} 
$$f :=  \frac{1}{r} \mathcal R \left( u - H \big[\apsol \big] + \frac{2}{r}  \right)$$
transforms the equation \eqref{eqn:tosolve} into the fixed-point problem
\begin{align}
	\label{eqn:cmapf}
	u & = -  {\mathcal Q} \circ  \mathcal R \left(u -  H \big[\apsol \big]  + \frac{2}{r} \right)  -  {\mathcal H} \circ  \mathcal R \left(u -  H \big[\apsol \big]  + \frac{2}{r} \right) \, .
\end{align}
up to the co-kernel error term in $\tilde{\mathcal K}$.  The remaining task is to show that the mapping 
$$\mathcal N_r : \coan (\apsol) \rightarrow \coan (\apsol)$$
given by the right hand side of \eqref{eqn:cmapf} is a contraction mapping onto a neighbourhood of zero containing $H \big[\apsol \big]  - \frac{2}{r}$.  Finally, we will show that a well-behaved solution of equation \eqref{eqn:tosolve} up to an error term can be found for any choice of $(\tau, W, \Xi)$ where $\Xi$ is sufficiently small, $W$ is contained in a compact subset of $ \prod_{q \in \overline{\mathcal V}} T_q M \setminus \boldsymbol{\Delta}$ where $\boldsymbol{\Delta}$ is the subset of $\prod_{q \in \overline{\mathcal V}} T_q M$ consisting of those $W$ for which the unit vectors associated to any pair of neighbouring vertices point in opposite directions (after parallel transport along the geodesic segment connecting the vertices in question so that the vectors can be compared), and $\tau$ is sufficiently small but all its components are positive and bounded away from zero by a fixed amount depending only on $r$.

\subsection{Structure of the Mean Curvature Operator}

In this section, we collect all the various structural facts about the mean curvature operator that will be used in subsequent sections of this paper.  We can more or less quote exactly the analogous results from \cite{memazzeo} since the setting here is in most respects identical to the setting of \cite{memazzeo}.  Nevertheless, a brief development of the results will be presented here in order to give the reader a sense of the structure of the partial differential equation that must be solved in order to deform $\apsol$ into a CMC surface.

\paragraph{Normal coordinate charts.}  Consider a geodesic normal coordinate chart centered at a point $p \in M$.  We can express the background metric in this chart as
$$ 
g := \mathring g + P :=  \big( \delta_{ij} + P_{ij}(x) \big) \, \dif x^i \otimes \dif x^j 
$$
where $\mathring g$ is the Euclidean metric and $P$ is the perturbation term.  It is well known that
\begin{equation}
	\label{eqn:metricexp}
	P_{ij}(x) = \frac{1}{3} \sum_{l,m} R_{iljm}(p) x^l x^m + 
\frac{1}{6} \sum_{l,m,n} R_{iljm;n}(p) x^l x^m x^n  + \mathcal O (\| x \|^4) \, .
\end{equation}
where $R_{ijkl} := \mathrm{Rm}( \frac{\partial}{\partial x^i}, \frac{\partial}{\partial x^j}, \frac{\partial}{\partial x^k}, \frac{\partial}{\partial x^l})$ and $R_{ijkl;m} := \bar \nabla_{\! \! \frac{\partial}{\partial x^m} } \mathrm{Rm}( \frac{\partial}{\partial x^i}, \frac{\partial}{\partial x^j}, \frac{\partial}{\partial x^k}, \frac{\partial}{\partial x^l})$ are components of the ambient Riemann curvature tensor $\mathrm{Rm}$ and its ambient covariant derivative $\bar \nabla \mathrm{Rm}$.  

\paragraph{Surfaces in normal coordinate charts.}  Now let $\Sigma$ be a surface (perhaps with boundary) contained in this chart.  We can now use the above expansion to derive corresponding expansions in terms of the background curvature at $p$ for any geometric object defined on $\Sigma$.  Here and in the rest of the paper, let $h, \Gamma, \nabla, \Delta, N, B, H$ be the induced metric, Christoffel symbols, covariant derivative, Laplacian, unit normal vector, second fundamental form, and mean curvature of $\Sigma$ with respect to the metric $g$, and let $\mathring h, \mathring \Gamma, \mathring \nabla, \mathring \Delta, \mathring N, \mathring B, \mathring H$ be these same objects with respect to the Euclidean metric. Near a point $x \in \Sigma$, let $\{ E_1, E_2\}$ be a local
frame for $T\Sigma$ induced by some coordinate system and denote by $Y := \sum_j x^j \frac{\partial}{\partial x^j}$ 
the position vector.  Define
\begin{align*}
	\mathcal P_{00} &:= P(\mathring N, \mathring N) \\
	\mathcal P_{0j} &:= P(\mathring N, E_j) \\
	\mathcal P_{ij} &:= P(E_i, E_j) \\
	\mathcal P_{ijt} &:= \tfrac{1}{2} \big( (E_i P) (E_j, E_t) + (E_j P)(E_i, E_t) - (E_t P) (E_i, E_j) \big) \\
	\mathcal P_{ij0} &:= \tfrac{1}{2} \big( (E_i P) (E_j, \mathring N) + (E_j P)(E_i, \mathring N) - (\mathring N P) (E_i, E_j) \big) \, .
\end{align*}
Straightforward geometric calculations now give us the following results.  

\begin{lemma}
	\label{lemma:basicexp}
	Let $\Sigma$ be a surface belonging to a geodesic normal coordinate chart of $M$ where the expansion \eqref{eqn:metricexp} is valid.  Then the induced metric of $\Sigma$ and the associated Christoffel symbols satisfy
	$$h_{ij} = \mathring h_{ij} + \mathcal P_{ij} \quad \mbox{ and } \quad h^{ij} =  \mathring h^{ij} - \mathring h^{is} \mathring h^{jt} \mathcal P_{st} + \mathcal O(\| Y \|^4) \quad \mbox{ and } \quad \Gamma_{ijk} = \mathring \Gamma_{ijk} + \mathcal P_{ijk} + \mathcal P_{0k} \mathring B_{ij} \, ,$$
	the normal vector of $\Sigma$ satisfies
	\begin{align*}
		N = \frac{ \mathring N - h^{ij} \mathcal P_{0j} E_i }{ \big( 1 + \mathcal P_{00} - \mathring h^{ij} \mathcal P_{0i} \mathcal P_{0j} \big)^{1/2} } = \big( 1 - \tfrac{1}{2} \mathcal P_{00} \big) \mathring N - \mathring h^{ij} \mathcal P_{0j} E_i + \mathcal O(\| Y \|^4)  \, ,
	\end{align*}
	the second fundamental form of $\Sigma$ satisfies
	\begin{align*}
		B_{ij} &= \big( 1 + \mathcal P_{00} - \mathring h^{ij} \mathcal P_{0i} \mathcal P_{0j} \big)^{1/2}  \mathring B_{ij} + \frac{\mathcal P_{ij0} - \mathring h^{kl} \mathcal P_{0k} \mathcal P_{ijl} }{  \big( 1 + \mathcal P_{00} - \mathring h^{ij} \mathcal P_{0i} \mathcal P_{0j} \big)^{1/2} } \\
		&= \big( 1 + \tfrac{1}{2} \mathcal P_{00} \big)\mathring B_{ij} + \mathcal P_{ij0}  + \mathcal B_{ij} (Y, \mathring B, \mathring N, E_1, E_2)  \, ,
	\end{align*}
	and finally the mean curvature of $\Sigma$ satisfies
	\begin{align*}
		H &=\big( 1 + \tfrac{1}{2} \mathcal P_{00} \big)  \mathring H + \mathring h^{ij} \mathcal P_{ij0}  - \mathring B^{ij} \mathcal P_{ij} + \mathcal H(Y, \mathring B, \mathring N, E_1, E_2)  \, .
	\end{align*} 
In the last two sets of identities,  $\mathcal B_{ij}$ and $\mathcal H$ are functions with
$$\max_{i,j} | \mathcal  B_{ij} (Y, \mathring B, \mathring N, E_1, E_2) | + | \mathcal H (Y, \mathring B, \mathring N, E_1, E_2) | \leq  C \| Y \|^3 (1 + \| Y \| \| \mathring B \| )$$
for a constant $C$ depending only on the curvature tensor of the ambient manifold.
\end{lemma}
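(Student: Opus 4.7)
The plan is to derive each formula by unpacking definitions relative to the frame $\{E_1,E_2\}$ and its Euclidean normal $\mathring N$, and then expanding everything in powers of the perturbation tensor $P$, which is $O(\|Y\|^2)$ by \eqref{eqn:metricexp}. Throughout I keep only terms up to those that appear explicitly in the statement, folding the rest into the error terms.

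First I would handle the induced metric directly: $h_{ij} = g(E_i,E_j) = \mathring g(E_i,E_j) + P(E_i,E_j) = \mathring h_{ij} + \mathcal P_{ij}$. For $h^{ij}$ I apply the Neumann expansion $(\mathring h + \mathcal P)^{-1} = \mathring h^{-1} - \mathring h^{-1}\mathcal P \mathring h^{-1} + O(\|\mathcal P\|^2)$, which uses only that $\mathcal P = O(\|Y\|^2)$, so the error is $O(\|Y\|^4)$. The Christoffel-symbol formula then follows from the identity $\Gamma_{ijk} = \tfrac12(E_i h_{jk} + E_j h_{ik} - E_k h_{ij})$: splitting $h = \mathring h + \mathcal P$ gives $\mathring\Gamma_{ijk}$ from the Euclidean piece. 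For the $\mathcal P$ piece, differentiating $P(E_j,E_k)$ in the coordinate chart yields $(E_iP)(E_j,E_k) + P(\bar\nabla_{\!E_i}E_j,E_k) + P(E_j,\bar\nabla_{\!E_i}E_k)$, where $\bar\nabla$ is the flat coordinate connection; the symmetric combination yields $\mathcal P_{ijk}$, and replacing the tangential components of $\bar\nabla_{\!E_i}E_j$ by their normal part $\mathring B_{ij}\mathring N$ (the only part that survives symmetrisation and is not already absorbed into the Euclidean term) produces the $\mathcal P_{0k}\mathring B_{ij}$ correction.

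Next, for the normal vector, I write the unknown $g$-unit normal as $N = \alpha\bigl(\mathring N - \lambda^i E_i\bigr)$ and impose the two conditions $g(N,E_k)=0$ and $g(N,N)=1$. The first gives $\lambda^i = h^{ij}\mathcal P_{0j}$, and substituting back into the second produces $\alpha^{-2} = 1 + \mathcal P_{00} - \mathring h^{ij}\mathcal P_{0i}\mathcal P_{0j} + O(\|Y\|^4)$. Expanding the square root to leading order gives the stated formula. For the second fundamental form, I use $B_{ij} = g(\bar\nabla^{g}_{E_i}E_j, N)$ together with the standard identity relating the two Levi-Civita connections: $g(\bar\nabla^{g}_{X}Y - \bar\nabla^{\mathring g}_{X}Y, Z) = \tfrac12\bigl((\bar\nabla^{\mathring g}_{X}P)(Y,Z) + (\bar\nabla^{\mathring g}_{Y}P)(X,Z) - (\bar\nabla^{\mathring g}_{Z}P)(X,Y)\bigr)$. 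The $\bar\nabla^{\mathring g}$ piece contributes $\mathring B_{ij}\, g(\mathring N,N)$ plus terms with tangential normal components (which I absorb in $\mathcal B_{ij}$); using $g(\mathring N,N) = 1 + \tfrac12\mathcal P_{00} + O(\|Y\|^4)$ from the previous step produces the $(1+\tfrac12\mathcal P_{00})\mathring B_{ij}$ factor. The connection-difference piece, evaluated on $(E_i,E_j,N)$ and projected to leading order along $\mathring N$, is exactly $\mathcal P_{ij0}$. Finally, the mean curvature formula follows by contracting with $h^{ij} = \mathring h^{ij} - \mathring h^{is}\mathring h^{jt}\mathcal P_{st} + O(\|Y\|^4)$ and collecting terms; the cross-term $-\mathring h^{is}\mathring h^{jt}\mathcal P_{st}\cdot \mathring B_{ij}$ is exactly $-\mathring B^{ij}\mathcal P_{ij}$.

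The only real bookkeeping obstacle is controlling the error terms $\mathcal B_{ij}$ and $\mathcal H$ with the stated bound $C\|Y\|^3(1+\|Y\|\|\mathring B\|)$. For this I note two facts from \eqref{eqn:metricexp}: $P = O(\|Y\|^2)$ and $\bar\nabla P = O(\|Y\|)$, so any term involving two factors from $P$ or a first derivative of $P$ multiplied by one $P$ factor is $O(\|Y\|^3)$ or better. Terms in which $\mathring B$ appears algebraically (arising from $\mathring\nabla_{E_i}E_j = (\mathring\nabla_{E_i}E_j)^T + \mathring B_{ij}\mathring N$) pick up an extra factor of $\|\mathring B\|$ but are always accompanied by at least $\|Y\|^2$ from a factor of $\mathcal P_{0\bullet}$ or $\mathcal P_{\bullet\bullet}$, and by one further factor of $\|Y\|$ coming either from $\bar\nabla P$ or from an additional $\mathcal P$. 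This yields precisely the estimate $\|Y\|^3(1+\|Y\|\|\mathring B\|)$, with the constant $C$ controlled by $C^2$ bounds on the ambient curvature. The remainder of the proof is mechanical.
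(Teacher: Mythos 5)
Your proposal is correct and spells out the ``straightforward geometric calculations'' that the paper invokes without further detail: compute $h$ and invert by Neumann series, solve for $N$ by imposing $g$-orthogonality to the $E_i$ and $g$-normalization, expand $B$ via the Koszul formula for the connection difference, contract for $H$, and control the remainders using $P = \mathcal O(\|Y\|^2)$, $\bar\nabla P = \mathcal O(\|Y\|)$. This is precisely the approach the lemma presupposes.

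One loose end worth fixing: in your Christoffel computation you dismiss the tangential part of $\bar\nabla^{\mathring g}_{E_i} E_j$ as ``already absorbed into the Euclidean term,'' but it is not --- writing $(\bar\nabla^{\mathring g}_{E_i} E_j)^T = \mathring\Gamma_{ij}^{\;l}E_l$ and pairing with $E_k$ under $g = \mathring g + P$ produces the cross term $\mathring\Gamma_{ij}^{\;l}\mathcal P_{lk}$ in addition to $\mathring\Gamma_{ijk}$. That term is likewise absent from the lemma as stated, which suggests the surface frame is implicitly taken to be $\mathring h$-normal at the base point (so $\mathring\Gamma$ vanishes there), or else the term is being tacitly lumped into the errors; you should say which. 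This does not propagate to the $B_{ij}$, $H$ expansions or the stated bound on $\mathcal B_{ij}$, $\mathcal H$, so it is a cosmetic imprecision rather than a gap, but the parenthetical justification you give is not quite right as written.
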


\paragraph{Normal graphs in normal coordinate charts.}  Let us next suppose that the surface $\Sigma$ in the geodesic normal coordinate chart where the expansion \eqref{eqn:metricexp} is valid is deformed in the following way.  Choose a function $f : \Sigma \rightarrow \R$ and define $\mu_{f} : \Sigma \rightarrow \R^{3}$ to be the normal deformation of $\Sigma$ by $f$ with respect to the normal vector field of $\Sigma$ calculated using the background metric $g$.  Put $\Sigma_{f} := \mu_{f} (\Sigma)$. We would like to obtain a workable expression for the mean curvature of $\Sigma_f$ and understand its dependence on the function $f$ and the expansion of the metric $g$.  

However, straightforward comparison is not possible because in order to express $\Sigma_f$ as a normal deformation of $\Sigma$ with respect to the Euclidean metric, we can not use a different normal graphing function.  Let $\mathring \mu_{f} : \Sigma \rightarrow \R^{3}$ to be the normal deformation of $\Sigma$ by $f$ with respect to the normal vector field of $\Sigma$ calculated using the Euclidean metric.  Put $\mathring \Sigma_{f} := \mu_{f} (\Sigma)$.  The next lemma shows that we can replace $\Sigma_f$ by $\mathring \mu_{f_0} (\Sigma)$ for a new function $f_0 : \Sigma \rightarrow \R$ and then relate $|f|_{C^{2, \alpha}}$ to $|f_0|_{C^{2, \alpha}}$. 

\begin{lemma}
	\label{lemma:normalgraph}
	Let $\Sigma$ be a surface belonging to a geodesic normal coordinate chart of $M$ where the expansion \eqref{eqn:metricexp} is valid and let $\Sigma_{f} := \mu_{f} (\Sigma)$.  Under the assumption that $\mathit{diam}(\Sigma)$ is sufficiently small and $|f| \| \mathring B \| \ll 1$ on $\Sigma$, then there exists a function $f_0 : \Sigma \rightarrow \R$ so that $\Sigma_f = \mathring \mu_{f_0} (\Sigma)$.  Moreover, $f_0$ satisfies the estimate
	$$|f_0|_{C^{2, \alpha}(B_\theta)} \leq C |f|_{C^{2, \alpha}(B_{2\theta})} $$
where $C$ is a constant that depends only on the geometry of $M$ in the coordinate chart, while $B_\theta$ and $b_{2 \theta}$ is any pair of concentric balls of radii $\theta$ and $2 \theta$ contained in the coordinate chart.
\end{lemma}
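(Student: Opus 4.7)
The plan is to realize $\Sigma_f$ as a Euclidean normal graph over $\Sigma$ by solving, point by point, for the Euclidean foot-of-perpendicular of each point of $\Sigma_f$, using the implicit function theorem. Because $\mu_f$ and $\mathring\mu_{f_0}$ differ only in their choice of normal direction, and because Lemma \ref{lemma:basicexp} shows that $N$ differs from $\mathring N$ by a term of order $\|Y\|^2$, the two maps must be related by a small reparametrisation of the base.

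First I would introduce the Euclidean normal tubular-coordinate map $\Psi: \Sigma \times (-\rho, \rho) \to \R^3$ given by $\Psi(p, t) := p + t \mathring N(p)$. Standard facts about the Euclidean normal bundle guarantee that $\Psi$ is a diffeomorphism onto a tubular neighbourhood of $\Sigma$ of width $\rho$ of order $1/\|\mathring B\|_\infty$. Given $y \in \Sigma$, the normal-coordinate expansion of the $g$-exponential map together with Lemma \ref{lemma:basicexp} yields
\begin{equation*}
    \mu_f(y) = y + f(y)\,\mathring N(y) + \mathcal O\bigl(|f|\,\|y\|^2 + |f|^2\,\|\mathring B\|\bigr),
\end{equation*}
so that under the smallness hypotheses $\mathit{diam}(\Sigma) \ll 1$ and $|f|\,\|\mathring B\| \ll 1$ the point $\mu_f(y)$ lies in the image of $\Psi$. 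The implicit function theorem then produces unique $p(y) \in \Sigma$ and $t(y) \in \R$ with $\mu_f(y) = \Psi(p(y), t(y))$, and, for $f$ sufficiently small, $y \mapsto p(y)$ is a diffeomorphism of $\Sigma$ onto its image. Setting $f_0 := t \circ p^{-1}$ gives $\Sigma_f = \mathring\mu_{f_0}(\Sigma)$.

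For the quantitative estimate, I would differentiate the identity $\mu_f(y) = p(y) + t(y) \mathring N(p(y))$ up to order two (and take H\"older quotients of second derivatives), using that the linearisation of $\Psi$ at $t=0$ is the identity on the decomposition $T_y\Sigma \oplus \R\mathring N(y) = T_y\R^3$. This expresses the first and second derivatives of $(p, t)$ as compositions of derivatives of $\mu_f$ with bounded linear maps, up to terms involving $\mathring B$. Standard composition and inverse-function estimates in H\"older spaces then give
\begin{equation*}
    |t|_{C^{2,\alpha}(U)} + |p - \mathit{id}|_{C^{2,\alpha}(U)} \leq C\,|f|_{C^{2,\alpha}(U')}
\end{equation*}
for any pair of concentric sets $U \subset U'$ for which the displacement $|p(y) - y| = \mathcal O(|f|)$ keeps preimages of $U$ inside $U'$; taking $U = B_\theta$ and $U' = B_{2\theta}$ suffices provided $|f|$ is much smaller than $\theta$, and the final composition $f_0 = t \circ p^{-1}$ contributes only a bounded multiplicative constant.

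The main technical obstacle is keeping the constant $C$ uniform as the geometry of $\Sigma$ varies, since in the applications $\apsol$ contains catenoidal neck regions where $\|\mathring B\|$ is very large. The hypothesis $|f|\,\|\mathring B\| \ll 1$ prevents $\Psi$ from losing injectivity, but for the H\"older norms on both sides to be truly comparable one must measure derivatives \emph{intrinsically}, with distances and parallel transports taken along $\Sigma$; the derivatives of $\mathring N$, which grow like $\|\mathring B\|$, are then naturally absorbed into the intrinsic covariant derivatives rather than producing uncontrolled factors. With this intrinsic framing the estimate reduces to a routine application of the implicit function theorem in H\"older spaces.
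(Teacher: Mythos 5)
Your approach --- realising $\Sigma_f$ as a Euclidean normal graph by nearest-point projection onto $\Sigma$ via the tubular map $\Psi(p,t) = p + t\mathring N(p)$, applying the implicit function theorem, and transferring derivative bounds by inverse- and composition-function estimates in H\"older spaces --- is the standard argument for this kind of reparametrisation lemma. The paper itself states the lemma without proof (delegating these structural facts to the reference \cite{memazzeo}), so there is no explicit argument to compare against, but what you outline is exactly the expected proof and the hypothesis $|f|\,\|\mathring B\| \ll 1$ is used precisely where you use it, to keep $\mu_f(y)$ inside the domain of injectivity of $\Psi$.

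Two small points. First, the error term $|f|^2\,\|\mathring B\|$ in your expansion of $\mu_f(y)$ is not of the right form: $\mu_f(y)$ lies on a geodesic of the ambient metric $g$ issuing from $y$, so its deviation from the affine expression $y + f(y)N(y)$ is governed by the Christoffel symbols of $g$ in the normal chart, which are $\mathcal O(\|Y\|)$; the quadratic error is therefore $\mathcal O(|f|^2\,\|Y\|)$. The extrinsic curvature $\mathring B$ enters only through the width $\rho \sim 1/\|\mathring B\|$ of the tubular neighbourhood on which $\Psi$ is invertible, not through the expansion of $\mu_f$ itself. This does not affect the argument but is worth stating correctly. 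Second, the concern you flag at the end is indeed the real technical content: once one differentiates $\mu_f(y) = p(y) + t(y)\mathring N(p(y))$ twice, one produces factors of $\|\mathring B\|$ and $\|\nabla\mathring B\|$, and showing that these combine with $|f|$, $\|\nabla f\|$ and $\|\nabla^2 f\|$ into quantities that the hypothesis (together with $\mathit{diam}(\Sigma)$ small) controls $\Sigma$-independently is precisely what makes the constant depend only on $M$. Working intrinsically, as you suggest, is the right mechanism, but a complete proof would need to display this bookkeeping explicitly rather than asserting it. As an outline, the proposal is sound and captures the intended argument.
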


We can now analyze the mean curvature operator and the second fundamental form of a surface of the form $\mathring \Sigma_{f} := \mathring \mu_{f} (\Sigma)$ in a geodesic normal coordinate chart, where $f: \Sigma \rightarrow \R$.  We start with the fact that the second fundamental form and mean curvature of $\mathring \Sigma_{f}$ with respect to the Euclidean metric can be decomposed as
\begin{equation}
	\label{eqn:eucquad}
	\begin{aligned}
		\mathring B [\mathring \Sigma_f ] &= \mathring B + \mathring B^{(1)} (f) +  \mathring B^{(2)} (f) \\
		\mathring H [\mathring \Sigma_f ] &= \mathring H  + \mathring{\mathcal L} (f) + \mathring{\mathcal Q} (f) 
	\end{aligned}
\end{equation}
where $\mathring{\mathcal L} := \mathring \Delta + \| \mathring B \|^2 $ and $\mathring B^{(1)}$ are linear operators while $\mathring Q$ and $\mathring B^{(2)}$ are second-order differential operators that are quadratic and higher in their arguments.  Expressions for these operators are well known and are given in \cite{memazzeo}.  Using Lemma \ref{lemma:basicexp} we can now derive from \eqref{eqn:eucquad} the analogous decomposition of the mean curvature of $\mathring \Sigma_{f}$ with respect to the background metric $g$ as well as obtain estimates for $f$.  This is carried out in \cite{memazzeo} and the result is the following.  We note that the requirement $| f| \| \mathring B \| + \| \mathring \nabla f \| \ll 1$ appearing below is actually very natural and we will show later that it holds for the functions we will be considering.

\begin{lemma}
	\label{lemma:prepestimates}
	Let $\Sigma$ be a surface belonging to a geodesic normal coordinate chart of $M$ where the expansion \eqref{eqn:metricexp} is valid and let $\mathring \Sigma_f := \mathring \mu_f(\Sigma)$.  Then the mean curvature of $\mathring \Sigma_f$ with respect to the background metric $g$ can be expanded as
	\begin{equation*}
	\label{eqn:meancurvfnexp}
	H[\mathring \Sigma_f] = H + \mathcal L(f) + \mathcal Q(f) + \mathcal H(f)
\end{equation*}
	where $\mathcal L$ is a linear operator, $\mathcal Q$ is a quadratic remainder and $\mathcal H$ is an error term.  Furthermore, the following estimates are valid.  First, $\mathcal L$ satisfies
\begin{align*}
	| \mathcal L(u) - \mathring{\mathcal L}(u) | &\leq C (1 + \| Y \| \| \mathring B \|) (|u| + \|Y \|  \| \mathring \nabla u \| + \| Y \|^2 \| \mathring \nabla^2 u \|) \, .
\end{align*}  
Under the assumption that $| f_i| \| \mathring B \| + \| \mathring \nabla f_i \| \ll 1$ for $i = 1, 2$, then $\mathcal Q$ satisfies
\begin{align*}
		 | \mathcal Q(f_1) -\mathcal Q(f_2) | & \leq C | f_1 - f_2 | \cdot \max_i \big(|f_i| \| \mathring B \|^3 + \|\mathring \nabla f_i \| \| \mathring B \|^2 +  \|\mathring \nabla f_i \| \| \mathring \nabla \mathring B \|  + \| \mathring \nabla^2 f_i \| \| \mathring B \| \big) \\
		&\qquad + C \| \mathring \nabla f_1 - \mathring \nabla f_2 \| \cdot \max_i \big(  | f_i | \| \mathring B \|^2 + \|\mathring \nabla f_i \| \| \mathring B \| + | f_i | \| \mathring \nabla \mathring B \| + \| \mathring \nabla^2 f_i \|  \big) \\
		&\qquad + C \| \mathring \nabla^2 f_1 - \mathring \nabla^2 f_2 \| \cdot \max_i \big( |f_i| \| \mathring B \| + \| \mathring \nabla f_i \| \big) \\
		&\qquad + C \| \mathring \nabla^2 f_1 - \mathring \nabla^2 f_2 \| \cdot \max_i \| \mathring \nabla f_i \|^2 + C \| \mathring \nabla f_1 - \mathring \nabla f_2 \| \cdot \max_i \| \mathring \nabla f_1 \| \| \mathring \nabla^2 f_i \| \\
		&\qquad +C | f_1 - f_2| \cdot \max_i \| \mathring \nabla f_i \| + C \| \mathring \nabla f_1 - \mathring \nabla f_2 \| \cdot \max_i | f_i | \\
		&\qquad + C \big(  | f_1 - f_2 |+ \| Y \| \| \mathring \nabla f_1 - \mathring \nabla f_2 \| \big) \cdot \max_{i} \big( | f_i |+ \| Y \| \| \mathring \nabla f_i \|\big) \, .
\end{align*}
Finally, under the assumption that $| f| \| \mathring B \| + \| \mathring \nabla f \| \ll 1$ then $\mathcal H$ satisfies
	$$| \mathcal H(f_1) -   \mathcal H(f_2)  | \leq C  \| Y \|^2  (1 + \| Y \| \| \mathring B \|)  \big(|f_1 - f_2| +  \|Y \| \| \mathring \nabla f_1 - \mathring \nabla f_2 \| + \|Y \|^2 \| \mathring \nabla^2 f_1 - \mathring \nabla^2 f_2 \| \big) \, .$$
In all of the estimates above, $C$ is a constant depending only on the curvature tensor of the ambient manifold at the center of the normal coordinate chart under consideration.
\end{lemma}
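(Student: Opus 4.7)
The plan is to combine the Euclidean normal graph expansion \eqref{eqn:eucquad} with the metric perturbation formulas of Lemma \ref{lemma:basicexp}. Concretely, I would substitute $\Sigma \leftarrow \mathring \Sigma_f$ into Lemma \ref{lemma:basicexp} to express the $g$-mean curvature $H[\mathring \Sigma_f]$ in terms of $\mathring H[\mathring \Sigma_f]$, $\mathring B[\mathring \Sigma_f]$, the frame induced on $\mathring \Sigma_f$, the position vector $Y$ of its points, and the metric perturbation tensor $\mathcal P$ evaluated on $\mathring \Sigma_f$. Then \eqref{eqn:eucquad} replaces $\mathring H[\mathring \Sigma_f]$ and $\mathring B[\mathring \Sigma_f]$ by $\mathring H + \mathring{\mathcal L}(f) + \mathring{\mathcal Q}(f)$ and $\mathring B + \mathring B^{(1)}(f) + \mathring B^{(2)}(f)$ respectively, so that every occurrence of a geometric quantity on $\mathring \Sigma_f$ is reduced to quantities on $\Sigma$ plus an expression explicitly polynomial in the jet $(f, \mathring\nabla f, \mathring \nabla^2 f)$.

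The next step is to sort the resulting expression into the three pieces of the decomposition $H[\mathring \Sigma_f] = H + \mathcal L(f) + \mathcal Q(f) + \mathcal H(f)$ by collecting terms according to their order in $f$. The zeroth-order part (evaluated at $f=0$) reproduces the identity of Lemma \ref{lemma:basicexp} for $\Sigma$ itself, hence equals $H = H[\Sigma]$. The part that is linear in $(f, \mathring\nabla f, \mathring\nabla^2 f)$ \emph{defines} $\mathcal L$, which equals $\mathring{\mathcal L}$ plus extra contributions obtained by coupling the $\mathcal O(\|Y\|^2)$ quantities $\mathcal P$, $\mathcal P_{\bullet\bullet\bullet}$ with the Euclidean linearizations $\mathring{\mathcal L}(f)$ and $\mathring B^{(1)}(f)$. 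What remains after subtracting off $H + \mathcal L(f)$ is split into $\mathcal Q(f)$, collecting everything that is strictly quadratic or higher in the jet of $f$, and $\mathcal H(f)$, collecting the purely metric-error terms carried over from the function $\mathcal H$ in Lemma \ref{lemma:basicexp} (which already has the $\|Y\|^3(1+\|Y\|\|\mathring B\|)$ prefactor).

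For the three estimates, I would argue as follows. For $\mathcal L - \mathring{\mathcal L}$: every summand in this difference has at least one factor of $\mathcal P$ or its first covariant derivative, which are $\mathcal O(\|Y\|^2)$ and $\mathcal O(\|Y\|)$ respectively by \eqref{eqn:metricexp}; pairing these factors against at most second-order derivatives of $u$, and absorbing the remaining scale into the $(1+\|Y\|\|\mathring B\|)$ factor inherited from $\mathcal B_{ij}$, yields the claimed bound. For $\mathcal Q(f_1) - \mathcal Q(f_2)$: since $\mathcal Q$ is a smooth second-order differential operator in the jet of $f$ that vanishes to second order at $f = 0$, a mean-value argument in $(f, \mathring\nabla f, \mathring\nabla^2 f)$ produces a sum of products of a difference-factor $|f_1-f_2|$, $\|\mathring\nabla f_1 - \mathring\nabla f_2\|$ or $\|\mathring \nabla^2 f_1 - \mathring \nabla^2 f_2\|$ with a corresponding maximum-factor; the individual powers of $\mathring B$ and $\mathring \nabla \mathring B$ in each term are precisely those appearing in the explicit form of $\mathring{\mathcal Q}$ (together with its metric-perturbation corrections), and the smallness hypothesis $|f|\|\mathring B\| + \|\mathring\nabla f\|\ll 1$ is exactly what allows Taylor-expansion of the denominators $(1 + \mathcal P_{00} - \cdots)^{1/2}$ so that no higher-order $f$-dependent factors appear with unbounded coefficients. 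Finally, $\mathcal H$ inherits its $\|Y\|^2(1 + \|Y\|\|\mathring B\|)$ prefactor directly from the residual term in Lemma \ref{lemma:basicexp}, and the Lipschitz estimate in the jet of $f$ is obtained by one further application of the mean-value theorem to this term.

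The main obstacle is purely bookkeeping: one must enumerate all the terms that arise from substituting the linear and quadratic parts of $\mathring H[\mathring \Sigma_f]$ and $\mathring B[\mathring \Sigma_f]$ into the expansion of Lemma \ref{lemma:basicexp}, and then verify that each one falls into exactly one of the three buckets $\mathcal L$, $\mathcal Q$, $\mathcal H$ with the correct powers of $|f|$, $\|\mathring\nabla f\|$, $\|\mathring\nabla^2 f\|$, $\|\mathring B\|$, $\|\mathring\nabla \mathring B\|$ and $\|Y\|$. Because this is exactly the bookkeeping carried out in \cite{memazzeo} in the cylindrically symmetric setting, the task here reduces to checking that the absence of symmetry introduces no new structural terms requiring stronger hypotheses — a point that is confirmed by inspecting Lemma \ref{lemma:basicexp}, whose expressions make no use of any symmetry of the ambient metric.
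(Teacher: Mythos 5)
The paper does not give a proof of this lemma beyond stating that it follows by combining Lemma~\ref{lemma:basicexp} with the Euclidean decomposition~\eqref{eqn:eucquad}, with the detailed bookkeeping delegated to \cite{memazzeo}; your proposal fleshes out exactly that recipe, so it takes the same route as the paper. One caveat worth flagging: the paper is not fully internally consistent about what $\mathcal H$ collects --- Section~\ref{sec:strategy} identifies it with the ambient Ricci contribution $\mathrm{Ric}(N_f,N_f)$, whereas you identify it with the residual $\mathcal H(Y,\mathring B,\mathring N, E_1,E_2)$ of Lemma~\ref{lemma:basicexp}; your reading is the one that actually produces the stated $\|Y\|^2(1+\|Y\|\|\mathring B\|)$ Lipschitz prefactor (differentiating a $\mathcal O(\|Y\|^3)$ term in the $f$-jet drops one power of $\|Y\|$), so the discrepancy does not affect the validity of your argument, but you should be aware the split $H + \mathcal L + \mathcal Q + \mathcal H$ is not a pure order-in-$f$ decomposition: $\mathcal L:=\Delta + \|B\|^2$ is only the dominant part of the linearization, which is why the coefficient of $|u|$ in the $\mathcal L-\mathring{\mathcal L}$ bound is $\mathcal O(1+\|Y\|\|\mathring B\|)$ rather than $\mathcal O(\|Y\|^2)$ (terms such as $\mathring B^{ij}\mathcal P_{ij0}\,u$, with $\mathcal P_{ij0}=\mathcal O(\|Y\|)$, are responsible, a point your phrase ``absorbing the remaining scale'' gestures at but could make explicit).
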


\subsection{Estimates of the Mean Curvature of the Approximate Solutions}
\label{sec:meancurvest}

The first step in the proof that the equation $H \big[\apsol \big] = \frac{2}{r}$ can be solved up to an error term is to estimate $\left| H [\apsol ] - \frac{2}{r} \right|_{\coan} $.  To do so, we need only trivially modify the estimates found in \cite{memazzeo} of this quantity for use in the present setting.  Therefore we give only a very abbreviated outline of the proof here.  In the what follows, set $ \eps := \max \{ \eps_q : q \in \overline{\mathcal V} \} \cup \{ \eps_{q^\flat} : q^\flat \in \overline{\mathcal V}^{\, \flat} \}$ and recall that $r_{\eps} = \mathcal O(r \eps^{3/4})$.  

\begin{prop}
	\label{prop:meancurvest}
	Suppose $\nu \in (1,2)$.  The mean curvature of $\apsol$ satisfies the estimate
	$$\left|  H [\apsol ] - \frac{2}{r} \right|_{\coan} \leq C \max \big\{ r^{3 - \nu}  , \, r^{1-\nu} \eps^{3/2 - 3\nu/4} \big\} $$
	for some constant $C$ independent of $r$ and $\eps$. Moreover, this estimate is uniform in $(\tau, W, \Xi)$ provided these satisfy the requirements set out in Section \ref{sec:strategy}.
\end{prop}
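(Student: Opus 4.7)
The plan is to decompose $\apsol$ using the partition of unity introduced above (subordinate to the cover by spherical regions $\mathcal S_q^{\,\sigma}$, neck cores $\mathcal N_{q^\flat}^{\,\sigma}$, and transition annuli $\mathcal T_{q^\flat}^{\,\sigma}$) and estimate $\zeta_r^{2-\nu}(H - 2/r)$ separately on each type of region. On each piece I work in a geodesic normal chart centered at the nearest vertex $q$ or neck midpoint $q^\flat$, decomposing the Riemannian mean curvature into its Euclidean part plus a curvature correction via Lemma~\ref{lemma:basicexp}, exactly as packaged in Lemma~\ref{lemma:prepestimates}. The three types of regions then produce three types of contributions to the weighted norm, which are combined at the end.

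On a spherical piece $\mathcal S_q^{\,\sigma}$ the surface is, after rescaling by $1/r$, the Euclidean normal graph of $G_{\vec q, \vec \eps}$ over $\Sph^2 \setminus \bigcup_i B_{r_\eps}(q_i)$. Its Euclidean mean curvature equals $\tfrac{2}{r} + \tfrac{1}{r}(\Delta_{\Sph^2} G_{\vec q, \vec \eps} + 2 G_{\vec q, \vec \eps}) + \mathring{\mathcal Q}(G_{\vec q, \vec \eps})$, and away from the excised discs the defining equation~\eqref{eqn:greensdef} reduces the linear part to $\tfrac{1}{r} J$ with $|J| = O(\eps)$. By the balancing relation~\eqref{eqn:balancingthree} the neck parameters satisfy $\eps = O(r^3)$, so this term is absorbed into the Riemannian correction from Lemma~\ref{lemma:basicexp}, which is of size $O(r)$ (using $\|Y\| = O(r)$ and $\|\mathring B\| = O(1/r)$ on the sphere). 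Thus $|H - 2/r|_{0,\mathcal S_q^{\,\sigma}} = O(r)$, and multiplying by $\zeta_r^{2-\nu} = r^{2-\nu}$ yields the contribution $r^{3-\nu}$.

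On the neck core $\mathcal N_{q^\flat}^{\,\sigma}$ the surface coincides with a rescaled catenoid (equation~\eqref{eqn:interpfn} with $\chi \equiv 1$), which has vanishing Euclidean mean curvature; hence $|H - 2/r| \leq \tfrac{2}{r} + O(\|Y\| \cdot \|\mathring B\|)$ by Lemma~\ref{lemma:basicexp}, and since $\zeta_r$ is bounded above by $O(\eps)$ in this region, the weighted contribution is at most $r^{-1} \eps^{2 - \nu}$, which is dominated by $r^{1-\nu} \eps^{3/2 - 3\nu/4}$ for $\nu \in (1,2)$. The dominant contribution comes from the transition annulus $\mathcal T_{q^\flat}^{\,\sigma}$, where $\tilde F^\pm$ from~\eqref{eqn:interpfn} blends the sphere and catenoid profiles. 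The optimal matching~\eqref{eqn:gluingmatch} guarantees that $F_{\mathit{sph}}^\ast - F_{\mathit{neck}}^{\eps^\flat, d^\flat, \pm}$ is of height $O((\eps^\flat)^{3/2})$ with its first two derivatives controlled on the scale $(\eps^\flat)^{3/4}$ on which the cutoff $\chi$ operates, so a direct computation gives $|H - 2/r|_{0,\mathcal T_{q^\flat}^{\,\sigma}} = O(1/r)$. Since $\zeta_r \sim r (\eps^\flat)^{3/4}$ in this annulus, the weighted contribution is $(r \eps^{3/4})^{2-\nu} \cdot r^{-1} = r^{1-\nu} \eps^{3/2 - 3\nu/4}$.

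The Hölder part of the $\coan$ norm is handled by the same procedure after rescaling each piece so that $\zeta_r$ has unit size, reducing the estimate to an unweighted Schauder estimate on fixed-scale smooth model geometries (round sphere, standard catenoid), producing the same orders. Summing over the finitely many pieces then yields the stated bound. Uniformity in $(\tau, W, \Xi)$ follows because all constants depend only on the geometry of $M$ and on the qualitative assumptions of Section~\ref{sec:strategy} --- $\Xi$ small, $\tau$ positive and bounded away from zero, and $W$ in a compact subset disjoint from $\boldsymbol{\Delta}$ --- and each estimate above depends continuously on these parameters in the allowed range. The main technical obstacle is the bound $|H - 2/r| = O(1/r)$ in the transition annulus: one must verify, using the Taylor expansions of $F_{\mathit{sph}}, F_{\mathit{sph}}', F_{\mathit{neck}}^{\eps^\flat, d^\flat, \pm}$ near the interpolation scale $(\eps^\flat)^{3/4}$, that not only the height but also the first two derivatives of $\tilde F^\pm$ are controlled so as to survive the weighted Hölder seminorm. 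This is the bookkeeping carried out in detail in~\cite{memazzeo}, and since the local construction of the building blocks and necks here is identical, that calculation transplants to the present setting essentially unchanged.
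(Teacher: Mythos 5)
Your overall strategy matches the paper's own (very brief) three-step outline: derive pointwise Euclidean estimates for the mean curvature and second fundamental form in the spherical, neck, and transition regions using the explicit local expressions; convert these to Riemannian estimates via Lemma~\ref{lemma:normalgraph} and Lemma~\ref{lemma:prepestimates}; and then assemble the weighted $\coan$ bound region by region, with uniformity in $(\tau, W, \Xi)$ following from the uniformity of the Euclidean data. The region-by-region accounting you carry out --- contribution $r^{3-\nu}$ from the spheres, the dominant $r^{1-\nu}\eps^{3/2-3\nu/4}$ from the interpolation annuli at matching scale $\| (x^2,x^3)\| \sim (\eps^\flat)^{3/4}$, and a subdominant neck-core contribution --- is the right picture and the orders of magnitude come out correctly.

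There is, however, one step you should not make, and it is a genuine logical error rather than a bookkeeping slip. You invoke the balancing relation~\eqref{eqn:balancingthree} to assert $\eps = \mathcal O(r^3)$ and then use this to absorb the $\tfrac{1}{r}|J| = \mathcal O(\eps/r)$ term into the $\mathcal O(r)$ Riemannian correction. This is backwards: Proposition~\ref{prop:meancurvest} is meant to hold \emph{uniformly} for all admissible $(\tau,W,\Xi)$ --- the balancing condition is only imposed afterwards in Section~\ref{sec:balancing}, and at the present stage of the argument it is neither assumed nor available. Furthermore, \eqref{eqn:balancingthree} is a vector identity forcing near-cancellation of $\sum \eps_i\eta_i$ against $\Omega r^3\nabla R(p)$; at an interior point with two nearly antipodal necks it does not in general bound the individual $\eps_i$ by $\mathcal O(r^3)$. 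The correct input is the standing constraint $r^3 < \eps < r^2$ on the admissible neck parameters (stated explicitly before Proposition~\ref{prop:soluptocoker} and justified a posteriori), which already gives $\eps/r < r$ and makes the absorption you want legitimate. With that correction your pointwise estimate $|H - 2/r| = \mathcal O(r)$ on the spherical pieces, and the rest of your assembly, goes through as in the paper.
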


\begin{proof} The proof follows three steps.  The first step is to derive pointwise estimates for the mean curvature and second fundamental form of $\apsol$ with respect to the Euclidean background metric in the spherical and neck regions in the geodesic normal coordinates in which they are defined.  For this purpose we use the well-known explicit expressions of these quantities that are available.  The second step is to convert these estimates into pointwise estimates for the mean curvature with respect to the background metric $g$ using Lemma \ref{lemma:normalgraph} and Lemma \ref{lemma:prepestimates}.  The third step is to deduce the estimate of the $\coan$ norm of $H[\apsol] - \frac{2}{r}$.  The result is as stated above, where the uniformity of the estimate in $W$ follows from the first step since the Euclidean mean curvature and second fundamental form of the building blocks is uniform in $W$ provided the unit vectors pointing to the neighbouring building blocks are not too close together. 
\end{proof}

\subsection{Analysis of the Linearized Mean Curvature Equation}

The second step in our proof is to find a parametrix $\mathcal R$ satisfying $\mathcal L \circ \mathcal R = \mathit{id} + \mathcal E$ where $\mathcal E$ maps into a subspace of functions $\tilde{\mathcal K}$.  We construct this parametrix by patching together solutions of the Euclidean linearized mean curvature equation in each geodesic normal coordinate chart used to define the constituents of $\apsol$ using a procedure that is in broad terms completely analogous to the construction found in \cite{memazzeo}.  However, because we have not imposed any symmetries whatsoever on $\apsol$, the approximate co-kernel is much larger now than in \cite{memazzeo} and this makes the construction of our parametrix rather different in the details.  Thus we present a more thorough proof.

\begin{prop}
	\label{prop:linest}
	Let $\nu \in (1,2)$.  There is an operator $\mathcal R: \coan(\apsol) \rightarrow \ctan(\apsol)$  that satisfies $\mathcal L \circ \mathcal R = \mathit{id} - \mathcal E$ where $\mathcal E : \coan(\apsol) \rightarrow  \tilde{\mathcal K}$ and $\tilde{\mathcal K}$ is a subspace of functions on $\apsol$ that will be defined below.  The estimates satisfied by $\mathcal R$ and $\mathcal E$ are
	$$| \mathcal R(f) |_{\ctan} + |\mathcal E(f)|_{C^{0,\alpha}_2} \leq C | f |_{\coan}$$
	for all $f \in \coan(\apsol)$, where $C$ is a constant independent of $r$ and $\eps$. Moreover, this estimate is uniform in $(\tau, W, \Xi)$ provided these satisfy the requirements set out in Section \ref{sec:strategy}. 
\end{prop}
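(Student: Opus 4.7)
The plan is to construct $\mathcal R$ as a patched sum of local parametrices on the spherical and neck regions, using the partition of unity $\{\chi^\sigma_{\mathit{sph},q}, \chi^\sigma_{\mathit{neck},q^\flat}\}$ from Section 3.1, and to absorb the resulting commutator errors and curvature corrections either into $\tilde{\mathcal K}$ or into a Neumann series remainder. The overall architecture follows the gluing parametrix of \cite{memazzeo}, but with a substantially larger approximate cokernel reflecting the absence of ambient symmetries.

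First I would build a local parametrix $\mathcal R^{\mathit{sph}}_q$ on each spherical building block. After rescaling $S_r[q,\ldots]$ by $1/r$ and transplanting to the model $\Sph^2$ via the geodesic normal coordinate map, Lemma \ref{lemma:prepestimates} shows that $\mathcal L$ is a small perturbation of $\Delta_{\Sph^2}+2$. The model operator on the punctured sphere admits a bounded right inverse in weighted H\"older spaces modulo projection onto $\mathcal K_{\Sph^2}=\mathrm{span}\{J_1,J_2,J_3\}$, provided the weight parameter $\nu \in (1,2)$ sits in the indicial gap at each puncture of size $r_\eps$. Next I would build a local parametrix $\mathcal R^{\mathit{neck}}_{q^\flat}$ on each neck: rescaling by $1/(\eps^\flat r)$ turns $\mathcal L$ into the Jacobi operator $\Delta_N+\|A_N\|^2$ of a unit catenoid on a truncated annulus, whose bounded kernel $\mathcal K_N$ is six-dimensional (three translations, two axial rotations, and one dilation, matching the six parameters in $\Xi_{q^\flat}$). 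Since $\nu$ lies strictly between the catenoidal indicial roots $0$ and $2$ at each asymptotic end, this operator is surjective onto $C^{0,\alpha}_{\nu-2}$ with a bounded right inverse modulo $\mathcal K_N$.

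I then define the approximate cokernel
\begin{equation*}
\tilde{\mathcal K} := \mathrm{span}\bigl\{\chi^\sigma_{\mathit{sph},q}\,J^q_s : q\in\overline{\mathcal V},\ s=1,2,3\bigr\}\ \oplus\ \mathrm{span}\bigl\{\chi^\sigma_{\mathit{neck},q^\flat}\,K^{q^\flat}_a : q^\flat\in\overline{\mathcal V}^{\,\flat},\ a=1,\ldots,6\bigr\},
\end{equation*}
where $J^q_s$ and $K^{q^\flat}_a$ are the spherical and catenoidal Jacobi fields pulled back to $\apsol$, and set
\begin{equation*}
\mathcal R(f) := \sum_{q\in\overline{\mathcal V}}\chi^\sigma_{\mathit{sph},q}\,\mathcal R^{\mathit{sph}}_q\bigl(\chi^\sigma_{\mathit{sph},q} f\bigr) + \sum_{q^\flat\in\overline{\mathcal V}^{\,\flat}}\chi^\sigma_{\mathit{neck},q^\flat}\,\mathcal R^{\mathit{neck}}_{q^\flat}\bigl(\chi^\sigma_{\mathit{neck},q^\flat} f\bigr).
\end{equation*}
Applying $\mathcal L$ produces $f$ plus three kinds of error: the local projections into $\mathcal K_{\Sph^2}$ and $\mathcal K_N$ (which sit inside $\tilde{\mathcal K}$ by construction), the commutators $[\mathcal L,\chi^\sigma]$ acting on the local parametrices and supported in the transition annuli $\mathcal T^{\sigma,\pm}_{q^\flat}$, and the curvature correction $\mathcal L-\mathring{\mathcal L}$ from Lemma \ref{lemma:prepestimates}. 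The commutator gains one derivative weighted by $\zeta_r$ and the curvature term carries an explicit $\|Y\|^2$ prefactor; both are $C^{0,\alpha}_{\nu-2}$-small in a positive power of $r$ and $\eps$, so a Neumann series promotes $\mathcal R$ to a genuine right inverse modulo $\tilde{\mathcal K}$ with the stated estimates.

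The main obstacle will be making the catenoidal parametrix uniform as $\eps^\flat\to 0$ and under the six-parameter variation $\Xi_{q^\flat}$, with weights that mesh with the spherical parametrix across the transition scale $\sigma(q^\flat)=\mathcal O(r\eps\log(1/\eps))$. The Fredholm analysis of the catenoid Jacobi operator on a truncated annulus requires imposing boundary data at the truncation that kills exactly the bounded Jacobi fields of $\mathcal K_N$ while retaining the decaying ones; the indicial interval $(0,2)$ for $\nu$ is forced by precisely this requirement, and any slippage here would cause the constant in the estimate to blow up as $\eps \to 0$. Uniformity in $(\tau,W,\Xi)$ then follows because all geometric quantities depend continuously on these parameters and vary over a compact set bounded away from the degeneration $\boldsymbol{\Delta}$, ensuring the local model operators and their inverses vary continuously and in a uniformly bounded manner.
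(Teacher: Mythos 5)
Your overall architecture — local parametrices on spheres and necks, patched by cut-offs, with errors absorbed into a finite-dimensional $\tilde{\mathcal K}$ and cleaned up by Neumann iteration — matches the paper's strategy. But there is a real gap in the way you attribute the $6|\overline{\mathcal V}^\flat|$ extra cokernel dimensions to catenoidal Jacobi fields, and this misidentifies the obstruction mechanism. For $\nu \in (1,2)$ the Jacobi operator of the \emph{complete} scaled catenoid $r\eps_{q^\flat} N$ is surjective onto $\coan$; this is exactly the asymptotically-flat-Laplacian fact the paper invokes. Your statement that the neck parametrix is a ``right inverse modulo $\mathcal K_N$'' contradicts surjectivity: there is no catenoidal cokernel at this weight, and the truncated-annulus boundary-condition picture you invoke does not naturally produce one either. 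In the paper the neck step is solved \emph{exactly}, with no projection, and only afterwards is the spherical problem posed on the residual $f - \mathcal L(\bar u_{\mathit{neck}})$. The additional cokernel elements then come from a different source: after inverting $\mathcal L_{\mathit{sph}}$ modulo $\mathcal K_{\Sph^2}$, the solution $u_{\mathit{sph},q}$ has nonzero constant and linear Taylor coefficients $a_{q,i}, b^s_{q,i}$ at each puncture $p_i$, and cutting these low-order terms off near the necks produces $\chi^{\sigma_1}_{\mathit{sph},q}\mathcal L_{\mathit{sph}}(\eta_{q,i}\chi^{\sigma_1}_{\mathit{neck},p_i})$ and $\chi^{\sigma_1}_{\mathit{sph},q}\mathcal L_{\mathit{sph}}(\eta_{q,i}\chi^{\sigma_1}_{\mathit{neck},p_i}\ell^s_{q,i})$ residuals that are not $\coan$-small. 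Those, together with the spherical Jacobi fields, constitute $\tilde{\mathcal K}$; the dimensions happen to match yours, but the basis and the justification do not.

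A secondary issue is the symmetric patching formula $\mathcal R(f) := \sum \chi^\sigma_{\mathit{sph},q}\mathcal R^{\mathit{sph}}_q(\chi^\sigma_{\mathit{sph},q} f) + \sum\chi^\sigma_{\mathit{neck},q^\flat}\mathcal R^{\mathit{neck}}_{q^\flat}(\chi^\sigma_{\mathit{neck},q^\flat} f)$. Applying $\mathcal L$ produces $\sum (\chi^\sigma_\ast)^2 f$ plus commutators, which is not $f$ since the cut-offs form a partition of unity ($\sum \chi^\sigma_\ast = 1$), not a partition of unity in squares. You need either to square-root the partition, or to use an outer cut-off $\tilde\chi$ that is identically one on the support of the inner $\chi$ — and the latter is exactly why the paper introduces the four-scale hierarchy $\sigma_1 < \sigma_2 < \sigma_3 < \sigma_4$. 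The sequential structure (solve on necks with data cut off at $\sigma_3$, extend with $\chi^{\sigma_4}$; subtract $\mathcal L(\bar u_{\mathit{neck}})$; solve on spheres with data cut off at $\sigma_2$, extend with $\chi^{\sigma_1}$) is not cosmetic: subtracting the neck contribution first is what confines the residual to the spheres and junctions, and the junction errors are precisely what generates $\tilde{\mathcal K}$. Your ``commutators are small by a positive power of $r$ and $\eps$'' step hides this: the parts of the commutator error coming from the low-order Taylor coefficients of the spherical solution are \emph{not} small in $\coan$ and must be identified explicitly, not absorbed by Neumann iteration.
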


\begin{proof}
Let $f \in \coan(\apsol)$ be given.  The task at hand is to solve the equation $\mathcal L(u) = f + \mathcal E(f)$ for a function $u \in \ctan(\apsol)$ and an error term $\mathcal E(f) \in \tilde{\mathcal K}$.  To begin, introduce four radii $\sigma_1 < \sigma_2 < \sigma_3 < \sigma_4  \ll r$ with the property that the supports of the gradients of the cut-off functions $\chi_{\ast}^{\sigma_i}$ and $\chi_{\ast}^{\sigma_j}$ do not overlap for $i \neq j$.  

\paragraph*{Step 1.} Choose $q^\flat \in \overline{\mathcal V}^{\, \flat}$ and let $f_{\mathit{neck}, q^\flat} := f \chi^{\sigma_3}_{\mathit{neck}, q^\flat}$.  This function  can be viewed as a function of compact support on the standard scaled catenoid  $r\eps_{q^\flat} N$.  Now consider the equation $\mathcal L_{\mathit{neck}} (u) =  f_{\mathit{neck}, q^\flat}$ on $r\eps_{q^\flat} N$, where $\mathcal L_{\mathit{neck}}$ is the linearized mean curvature operator of $r\eps_{q^\flat} N$ with respect to the Euclidean metric.  Then the pull-back of $\mathcal L$ to $r\eps_{q^\flat} N$ is a small perturbation of $\mathcal L_{\mathit{neck}}$.  By the theory of the Laplace operator on asymptotically flat manifolds, the operator $\mathcal L_{\mathit{neck}}$ is surjective onto $\coan(r \eps_{q^\flat} N)$ when $\nu \in (1,2)$.  Hence there is a solution $u_{\mathit{neck}, q^\flat}$ as desired, satisfying the estimate $|u_{\mathit{neck}, q^\flat}|_{\ctan} \leq C |f|_{\coan}$ where these norms can be taken as the pull-backs of the weighted H\"older norms being used to measure functions on $\apsol$.  Finally, extend $u_{\mathit{neck}, q^\flat}$ to all of $\apsol$ by the definition $\bar u_{\mathit{neck}, q^\flat} := u_{\mathit{neck}, q^\flat} \chi^{\sigma_4}_{\mathit{neck}, q^\flat}$ and set $\bar u_{\mathit{neck}} := \sum_{q^\flat \in \overline{\mathcal V}^{\, \flat}} \bar u_{\mathit{neck}, q^\flat}$.

\paragraph*{Step 2.} Choose $q \in \overline{\mathcal V}$ and suppose that the spherical building block $S_q := \mathcal S_q^{\sigma_2}$ is built from $\Sph^2 \setminus \{ p_1, \ldots, p_{n_q} \}$.  Let  $f_{\mathit{sph}, q} := \big( f - \mathcal L( \bar u_{\mathit{neck}}  )\big) \chi_{\mathit{sph}, q}^{\sigma_2}$.  Then $f_{\mathit{sph}, q}$ is a function of compact support on $S_q$ and thus can be viewed as a function of compact support on the sphere $\Sph^2  \setminus \{ p_1, \ldots, p_{n_q} \}$.  Now consider the equation $\mathcal L_{\mathit{sph}} (u) = f_{\mathit{sph}, q}$, where $\mathcal L_{\mathit{sph}}$ is the linearized mean curvature operator of $\Sph^2$ with respect to the Euclidean metric.  Then the pull-back of $\mathcal L$ to the sphere is a small perturbation of $\mathcal L_{\mathit{sph}}$.  It is not \emph{a priori} possible to solve the equation $\mathcal L_{\mathit{sph}} (u_{\mathit{sph}, q}) = f_{\mathit{sph}, q}$ on $\Sph^2$ because $\mathcal L_{\mathit{sph}}$ possesses the three-dimensional kernel $\mathcal K_{\Sph^2} := \mathrm{span} \{ J_1, J_2, J_3 \}$.  Let $f_{\mathit{sph}, q}^\perp$ be the projection of $f_{\mathit{sph}, q}$ to the orthogonal complement of $\mathcal K_{\Sph^2}$ with respect to the Euclidean $L^2$-inner product.  Now there is a solution of the equation $\mathcal L_{\mathit{sph}}(u_{\mathit{sph}, q}) = f_{\mathit{sph}, q}^\perp$ satisfying the estimate $|u_{\mathit{sph}, q} |_{C^{2, \alpha}(\Sph^2)} \leq C | f_{\mathit{sph}, q}^\perp|_{C^{0, \alpha}(\Sph^2)}$.   One thus has $|u_{\mathit{sph}, q} |_{C^{2, \alpha}(r \Sph^2)} \leq C \sigma_2^{\nu - 2} |f|_{\coan}$ when scaled and pushed forward to $S_q$.

A modified solution satisfying weighted estimates on $S_q$ can be obtained by exploiting the behaviour of the Taylor series expansion of $u_{\mathit{sph}, q}$ near the points $p_i$. We can write 
$$u_{\mathit{sph}, q} := v_{\mathit{sph}, q} + \sum_{i=1}^{n_q} \Big( a_{q, i} + \sum_{s=1}^2 b_{q, i}^s \ell_{q, i}^s\Big) \eta_{q, i}  $$
where $v_{\mathit{sph}, q}$ satisfies $|v_{\mathit{sph}, q}(x)| \leq C \sigma_2^{\nu - 2} \mathrm{dist}(x, p_i)^2 | w|_{\coan}$ near $p_i$ and the remaining quantities are as follows: real numbers $a_{q, i} := u_{\mathit{sph}, q}(p_i)$, real numbers $b_{q, i}^s$ which are components of the vector $ \mathring \nabla u_{\mathit{sph}, q} (p_i)$ in a fixed basis of $T_{p_i} \Sph^2$, associated functions $\ell_{q, i}^s$ that are linear in the distance from $p_i$, and finally functions $\eta_{q, i}$ that equal one near $p_i$ and vanish a small $\eps$-independent distance away from $p_i$. Furthermore, $a_{q, i}$ and $b_{q, i}^s$ satisfy $\| a \| + \| b \| \leq C \sigma_2^{\nu - 2} |f|_{\coan}$.  Finally, by adding the correct linear combination of the $ J_{s}$ to $u_{\mathit{sph}, q}$, we can always ensure that the $a, b$ quantities associated to one of the points $p_i$ vanish.  

We now extend each $u_{\mathit{sph}, q}$ to $\apsol$ and combine them as follows.  We let $\bar u_{\mathit{sph}} := \bar v_{\mathit{sph}} + A$ where $\bar v_{\mathit{sph}} := \sum_{q \in \overline{\mathcal V}} \chi^{\sigma_1}_{\mathit{sph}, q} v_{\mathit{sph}, q} $ and $A := \sum_{q \in \overline{\mathcal V}}  \sum_{i=1}^{n_q} \Big( a_{q, i} + \sum_s b_{q, i}^s \ell_{q, i}^s \Big) \eta_{q, i} \chi_{\mathit{neck}, p_i}^{\sigma_1}$.  The function $\bar u_{\mathit{sph}}$ is now defined on all of $\apsol$ and we have the estimates $|\bar v_{\mathit{sph}}|_{\ctan} +  \sigma_2^{2 - \nu} \big( \| a  \| + \| b \| \big) \leq C | f |_{\coan}$. 

\paragraph*{Step 3.} Let $u^{(1)} := \bar v_{\mathit{sph}} + \bar u_{\mathit{neck}}$ and $\mathcal E^{(1)}(f) := - \sum_{q \in \overline{\mathcal V}} \chi_{\mathit{sph}, q}^{\sigma_1} f_{\mathit{sph}, q}^\|  - \sum_{q \in \overline{\mathcal V}}  \chi_{\mathit{sph}, q}^{\sigma_1} \mathcal L_{\mathit{sph}}(A_q)$ where $f_{\mathit{sph}, q}^\|$ is the projection of $f_{\mathit{sph}, q}$ to $\mathrm{span} \{ J_{q, s} : s = 1, 2, 3 \}$ with respect to the Euclidean $L^2$-inner product and $J_{q, s}$ is the push-forward of the function $J_s$ to $S_q$.  By collecting the estimates from Steps 1 and 2, one has $|u^{(1)}|_{\ctan} \leq C |f|_{\coan}$.   Then using the same arguments as in \cite{memazzeo}, we find that
\begin{equation}
	\label{eqn:iteration}
	|\mathcal L (u^{(1)}) - f - \mathcal E^{(1)}(f) |_{\coan} \leq \theta |f|_{\coan} \qquad \mbox{and} \qquad | \mathcal E^{(1)}(f)|_{C^{0,\alpha}_2} \leq \; C \sigma_2^{\nu - 2} |f|_{\coan}
\end{equation}
where $\theta$ can be made as small as desired by adjusting $\sigma_1, \ldots, \sigma_4$ and $\eps$ suitably.  The consequence is that one can iterate Steps 1 and 2 to construct sequences $u^{(n)}$ and $\mathcal E^{(n)}(f)$ that converge to $u := \mathcal R(f)$ and $\mathcal E(f)$ respectively, satisfying the desired bounds.
\end{proof}

The definition of the finite-dimensional image of the map $\mathcal E : \coan(\apsol) \rightarrow \tilde{\mathcal K}$ is a by-product of Step 3 of the previous proof.

\begin{defn}
	\label{defn:apcoker}
	The \emph{approximate co-kernel} of the operator $\mathcal L$ is the subspace 
	\begin{align*}
		\tilde{\mathcal K} &:= \mathrm{span} \{  \chi_{\mathit{sph}, q}^{\sigma_1} J_{q, s} : q \in \overline{\mathcal V} \mbox{ and } s = 1, 2, 3 \} \\
		&\qquad \oplus \mathrm{span} \{ \chi_{\mathit{sph}, q}^{\sigma_1} \mathcal L_{\mathit{sph}}\big(   \eta_{q, i} \chi_{\mathit{neck}, p_i}^{\sigma_1} \big) : q \in \overline{\mathcal V} \mbox{ where $S_q$ has necks at $p_1, \ldots, p_{n_q}$}       \} \\
		&\qquad \oplus \mathrm{span} \{   \chi_{\mathit{sph}, q}^{\sigma_1} \mathcal L_{\mathit{sph}}\big(   \eta_{q, i} \chi_{\mathit{neck}, p_i}^{\sigma_1} \ell_{q, i}^s \big) : s = 1, 2 \mbox{ and } q \in \overline{\mathcal V} \mbox{ where $S_q$ has necks at $p_1, \ldots, p_{n_q}$}        \}\, .
	\end{align*}
\end{defn}

\subsection{The Non-Linear Estimate of the Mean Curvature Operator}

The next step in our proof is to find estimates for the $\coan$ norm of the quadratic remainder term 
$\mathcal Q $ and the error term $\mathcal H$ appearing in the expansion of the mean curvature operator. Once again, we need only trivially modify the analogous estimates found in \cite{memazzeo} for use in the present setting.  Therefore we give only a very abbreviated outline of the proofs here.  Since the desired estimates come from combining Lemma \ref{lemma:normalgraph} and Lemma \ref{lemma:prepestimates}, we must first justify the assumption required there. 

\begin{lemma}
	\label{lemma:secondffsmall}
	Pick $x \in \apsol$.  Then $x$ belongs to one of the normal coordinate charts used in the construction of $\apsol$ where the second fundamental form with respect to the Euclidean metric is $\mathring B(x)$.  At this point, the estimate 
	$$r  \big( \| \mathring B(x) \| |f(x)| + \| \mathring\nabla f(x) \|\big)  \leq Cr^{\nu} |f|_{C^{2,\alpha}_\nu}$$
	holds for all $f \in \ctan(\apsol)$, where $C$ is a constant independent of $r$, $\eps$ and $\delta$.
\end{lemma}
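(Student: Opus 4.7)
The plan is to unpack the definition of the weighted $C^{2,\alpha}_\nu$-norm, control $\mathring\nabla f$ by the intrinsic gradient $\nabla f$ (using that the induced metric on $\apsol$ is a small perturbation of the Euclidean one in each local chart), and then combine these with a single uniform geometric bound on $\|\mathring B\|$ in terms of the weight $\zeta_r$. Concretely, the definition of $|\cdot|_{\ctan}$ immediately gives
$$
|f(x)| \le \zeta_r(x)^{\nu}\,|f|_{\ctan},\qquad \|\nabla f(x)\| \le \zeta_r(x)^{\nu-1}\,|f|_{\ctan},
$$
where $\nabla$ is the Levi-Civita connection of the metric induced on $\apsol$ by $g$. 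Since in every geodesic normal coordinate chart used to build $\apsol$ the induced metrics from $g$ and from the Euclidean background agree to leading order (as recorded in Lemma \ref{lemma:basicexp}, where the perturbation $P$ is $\mathcal O(\|Y\|^2)$ with $\|Y\|\le Cr$), the two gradients are uniformly equivalent: $\|\mathring\nabla f(x)\| \le C \|\nabla f(x)\|$. The remaining and essentially only geometric task is the estimate
\begin{equation}\label{eqn:keybB}
\|\mathring B(x)\| \le \frac{C}{\zeta_r(x)} \qquad \text{for every } x \in \apsol.
\end{equation}

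To establish \eqref{eqn:keybB}, I would check the inequality on the three parts of the covering of $\apsol$ used to define $\zeta_r$. On each spherical region $\mathcal S_q^{\sigma}$ the building block is a graph of size $\mathcal O(\eps)$ over the round sphere of radius $r$, so $\|\mathring B\| \sim 1/r$, while by definition $\zeta_r = r$ there, and \eqref{eqn:keybB} holds. On each catenoidal neck region $\mathcal N_{q^\flat}^{\sigma(q^\flat)}$ the surface is a graph over (a truncation of) the scaled catenoid of neck-size $r\eps_{q^\flat}$, and a direct calculation of the catenoid's second fundamental form shows that $\|\mathring B\|$ is comparable to the reciprocal of the natural scale, which is precisely the quantity tracked by $\zeta_r$ (consisting of $\eps_{q^\flat}$ near the waist and growing linearly with $\delta_{\mathit{neck},q^\flat}$ away from it). Finally, on the transition annuli $\mathcal T_{q^\flat}^{\sigma(q^\flat)}$ both $\|\mathring B\|$ and $1/\zeta_r$ are smooth, bounded, and monotone between the previous two regimes, so \eqref{eqn:keybB} follows from the smooth interpolation used in the definition of $\zeta_r$.

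With \eqref{eqn:keybB} in hand, the statement is immediate:
\begin{align*}
r\bigl(\|\mathring B(x)\|\,|f(x)| + \|\mathring\nabla f(x)\|\bigr)
&\le r\Bigl(\tfrac{C}{\zeta_r(x)}\,\zeta_r(x)^{\nu} + C\,\zeta_r(x)^{\nu-1}\Bigr)|f|_{\ctan} \\
&\le C\,r\,\zeta_r(x)^{\nu-1}\,|f|_{\ctan}.
\end{align*}
Because $\nu - 1 > 0$ and $\zeta_r \le C\, r$ everywhere on $\apsol$, the last factor is bounded by $C r^{\nu-1}$, giving $r^\nu |f|_{\ctan}$ as claimed.

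The only real work is \eqref{eqn:keybB}, and the only subtlety there is keeping the $r$-scaling straight: the catenoidal piece is $(r\eps_{q^\flat})$-scaled, so $\mathring B$ and $\zeta_r$ each carry a factor of this scale in opposite powers, with the product correctly producing the spherical value $1/r$ after interpolation. Once the scalings are matched across the three regions, the rest of the argument is bookkeeping with the weighted norm and the $\nu>1$ inequality.
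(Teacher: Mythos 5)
The paper states this lemma without proof (it is offered as a ``justification of an assumption''), so there is no proof in the paper to compare against. Your overall structure is the natural one and the final computation is correct \emph{given} your key estimate $\|\mathring B(x)\|\,\zeta_r(x)\leq C$ together with the global bound $\zeta_r\leq Cr$ and $\nu-1>0$.

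The gap is in your verification of that key estimate on the necks, and it is a genuine scaling issue rather than a cosmetic one. You write that on the neck $\zeta_r$ ``consists of $\eps_{q^\flat}$ near the waist,'' but the neck is a catenoid scaled by the \emph{physical} length $r\eps_{q^\flat}$, so $\|\mathring B\|\sim 1/(r\eps_{q^\flat})$ at the waist. With $\zeta_r\sim\eps_{q^\flat}$ this gives $\|\mathring B\|\,\zeta_r\sim 1/r$, not $\leq C$ --- exactly as you yourself note in your closing paragraph, where you say ``the product correctly producing the spherical value $1/r$.'' That observation contradicts the estimate $\|\mathring B\|\leq C/\zeta_r$ that your displayed computation actually uses. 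If the product really were of order $1/r$, the argument would yield $r\,\|\mathring B\|\,|f|\lesssim \zeta_r^{\nu-1}|f|_{\ctan}\sim\eps^{\nu-1}|f|_{\ctan}$, and since $\eps\in[r^3,r^2]$ this is \emph{not} bounded by $r^\nu|f|_{\ctan}$ when $\nu$ is near $1$, so the lemma would fail. The fix is that the weight at the waist must in fact scale like the physical neck-size $r\eps_{q^\flat}$ (so that it interpolates smoothly to the value $r$ on the spheres and so that the parametrix bound of Proposition~\ref{prop:linest} is scale-invariant between necks and spheres), i.e.\ $\zeta_r\sim\sqrt{(r\eps_{q^\flat})^2+\delta_{\mathrm{neck}}^2}$; with that reading $\|\mathring B\|\,\zeta_r\sim 1$ throughout, your inequality \eqref{eqn:keybB} holds, and the rest of your argument closes. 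So the proof idea is right, but you need to tighten up what $\zeta_r$ actually equals on the catenoidal pieces --- the commentary about the product being $1/r$ must be replaced by ``the product is uniformly bounded.''
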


\noindent Hence it is true that $r ( |f| \| \mathring B \| + \| \mathring \nabla f \|) \ll 1$ can be ensured by keeping $|f|_{C^{2, \alpha}_\nu}$ small enough.  The following estimates are a consequence.

\begin{prop}
	\label{prop:nonlinest}
	There exists $M>0$ so that if  $f_1, f_2 \in \ctan(\apsol)$ for $\nu \in (1, 2)$ and satisfying $|f_1|_{C^{2,\alpha}_\nu} + |f_2|_{C^{2,\alpha}_\nu} \leq M$, then
	\begin{align*}
		| \mathcal Q( f_1) - \mathcal Q( f_2) |_{\coan} &\leq C r^{\nu - 1}  |f_1 - f_2 |_{\ctan} \max \big\{ |f_1|_{\ctan}, |f_2|_{\ctan} \big\} \\
		| \mathcal H( f_1) - \mathcal H( f_2) |_{\coan} &\leq C r^{4} |f_1 - f_2 |_{\ctan}
	\end{align*}
	where $C$ is a constant independent of $r$ and $\eps$.  Moreover, the estimates are uniform in $(\tau, W, \Xi)$ provided these satisfy the requirements set out in Section \ref{sec:strategy}.\end{prop}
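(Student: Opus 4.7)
The plan is to derive the stated weighted H\"older-norm estimates by combining the pointwise estimates of Lemma \ref{lemma:prepestimates} with the scaling behavior of the weight function $\zeta_r$ in the spherical and neck regions of $\apsol$. First I would verify the smallness hypothesis $|f_i| \|\mathring B\| + \|\mathring\nabla f_i\| \ll 1$ needed to apply Lemma \ref{lemma:prepestimates}. Lemma \ref{lemma:secondffsmall} gives $r(|f_i| \|\mathring B\| + \|\mathring\nabla f_i\|) \leq Cr^\nu |f_i|_{\ctan}$, so choosing the absolute constant $M$ sufficiently small in the statement ensures the hypothesis holds uniformly on $\apsol$ for both $f_1$ and $f_2$.

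Next I would translate the pointwise bounds into weighted bounds via three scaling inputs: (i) by the definition of the weighted norm, $|f| \leq \zeta_r^\nu |f|_{\ctan}$, $\|\mathring\nabla f\| \leq \zeta_r^{\nu-1}|f|_{\ctan}$, and $\|\mathring\nabla^2 f\| \leq \zeta_r^{\nu-2}|f|_{\ctan}$; (ii) the Euclidean second fundamental form of the building blocks and necks satisfies $\|\mathring B\| \leq C/\zeta_r$ and $\|\mathring\nabla \mathring B\| \leq C/\zeta_r^2$ in every normal coordinate chart used in the construction; and (iii) the position vector in each chart satisfies $\|Y\| \leq C r$ on spherical pieces and $\|Y\| \leq C r \eps^{3/4}$ on neck pieces. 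For $\mathcal Q$, one multiplies the pointwise bound from Lemma \ref{lemma:prepestimates} by the weight $\zeta_r^{2-\nu}$ and checks term by term that every curvature-involving summand scales as $\zeta_r^{2\nu-3}$, while the remaining position-vector terms scale as strictly higher powers of $\zeta_r$. Since $\zeta_r \leq r$ and $\nu > 1$, every such factor is bounded by $r^{\nu-1}$, and the two remaining factors of $\zeta_r^\nu$ combine with the $\ctan$ norms of the $f_i$ to yield the claimed bilinear estimate. For $\mathcal H$ the calculation is simpler: each summand in Lemma \ref{lemma:prepestimates} produces an overall power $r^{k+2}\zeta_r^{2-k}$ for some $k \in \{0,1,2\}$, and $\zeta_r \leq r$ then yields the bound $r^4 |f_1 - f_2|_{\ctan}$; the subleading terms weighted by $\|Y\|\|\mathring B\|$ contribute additional factors of $\eps^{11/4}$ or smaller in the neck and so do not disturb the leading $r^4$.

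The H\"older-seminorm component of the $\coan$ norm is handled by a parallel argument: one applies the pointwise estimates at a pair of nearby points, uses the parallel transport $\Xi_{x,x'}$ and the mean value theorem along minimal geodesics of $\apsol$ to convert differences into derivative factors, and absorbs the extra derivative using the $[\mathring\nabla^2 f_i]_\alpha$ seminorm built into $|f_i|_{\ctan}$, producing no worse power of $\zeta_r$ than the sup-norm analysis. The main obstacle is the bookkeeping for $\mathcal Q$: the pointwise estimate of Lemma \ref{lemma:prepestimates} contains many distinct monomials in $f_i$, $\mathring\nabla f_i$, $\mathring\nabla^2 f_i$, $\mathring B$, $\mathring\nabla\mathring B$ and $Y$, each of which must be verified individually to produce at least the scaling $\zeta_r^{\nu-1}$ after weighting. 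Since this case-by-case computation is carried out in full in \cite{memazzeo} and the pointwise estimates depend only on the local Euclidean geometry of a single building block or neck, essentially the same calculation applies here with only notational changes; uniformity in $(\tau, W, \Xi)$ is automatic because the constants depend only on quantities that are controlled uniformly over the admissible parameter range specified in Section \ref{sec:strategy}, in particular because $W$ is required to lie in a compact set bounded away from the excluded configurations $\boldsymbol{\Delta}$.
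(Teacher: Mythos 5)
Your approach matches the paper's almost exactly: the paper too reduces the proposition to the pointwise estimates of Lemma~\ref{lemma:prepestimates} and Lemma~\ref{lemma:normalgraph}, verifies the smallness hypothesis via Lemma~\ref{lemma:secondffsmall}, and then delegates the detailed weighted-norm bookkeeping to \cite{memazzeo}. Your treatment of $\mathcal{H}$ is clean and correct: each summand produces $r^{k+2}\zeta_r^{2-k}\le r^4$ as you say.

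There is, however, a bookkeeping inconsistency in the $\mathcal{Q}$ paragraph worth flagging. You write that after multiplying the pointwise bound by the weight $\zeta_r^{2-\nu}$, each curvature-involving summand scales as $\zeta_r^{2\nu-3}$ and is then ``bounded by $r^{\nu-1}$ since $\zeta_r\le r$.'' In fact $\zeta_r^{2\nu-3}$ is the scaling of the curvature summand \emph{before} the weight is applied (e.g.\ $|f_1-f_2|\,|f_i|\,\|\mathring B\|^3\lesssim \zeta_r^{\nu}\cdot\zeta_r^{\nu}\cdot\zeta_r^{-3}\,|f_1-f_2|_{\ctan}|f_i|_{\ctan}=\zeta_r^{2\nu-3}|f_1-f_2|_{\ctan}|f_i|_{\ctan}$), and for $\nu\in(1,3/2)$ the exponent $2\nu-3$ is negative, so $\zeta_r\le r$ gives $\zeta_r^{2\nu-3}\ge r^{2\nu-3}$ and the stated inference fails. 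The correct accounting is that the \emph{weighted} curvature summand scales as $\zeta_r^{2-\nu}\cdot\zeta_r^{2\nu-3}=\zeta_r^{\nu-1}$, and since $\nu-1>0$ one has $\zeta_r^{\nu-1}\le r^{\nu-1}$, which is what is wanted. Also, once the two factors of $\zeta_r^\nu$ have been used to produce the scaling $\zeta_r^{2\nu-3}$ they are no longer ``remaining,'' so the final sentence of that paragraph double-counts them. None of this affects your conclusion, but as written the argument does not quite close, and should be redone with the exponent $\zeta_r^{\nu-1}$ in place of $\zeta_r^{2\nu-3}$ for the weighted quantity.
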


\subsection{The Contraction Mapping Argument}

We are now in a position to solve the CMC equation up to a finite-dimensional error.  
Let $E(r, \eps) := \max \big\{ r^{3 - \nu}  ,  \, r^{1-\nu} \eps^{3/2 - 3\nu/4} \big\}$ and assume $r^3 < \eps < r^2 \ll 1$ (this will be justified \emph{a posteriori}).  The following estimates have been established.
\begin{itemize}
	\item The mean curvature satisfies $\big| H[\apsol]  - \frac{2}{r} \big|_{\coan} \leq C E(r, \eps)$.
	
	\item There is a parametrix $\mathcal R$ satisfying $\mathcal L \circ \mathcal R = \mathit{id} - \mathcal E$ where $\mathcal E$ maps into the finite-dimensional space $\tilde{\mathcal K}$ and  $| \mathcal R(f) |_{\ctan} + | \mathcal E(f)|_{C^{0, \alpha}_2} \leq C |f|_{\coan}$ for all $f \in \coan(\apsol)$.
		
	\item The quadratic remainder satisfies $| \mathcal Q(f_1) - \mathcal Q(f_2) |_{\coan} \leq C r^{\nu-1} |f_1 - f_2|_{\ctan} \max_i \big\{  |f_i|_{\ctan} \big\}$ for all $f_1, f_2 \in \ctan(\apsol)$ with sufficiently small $\ctan$ norm.
	
	\item The error term satisfies $| \mathcal H(f_1) - \mathcal H(f_2) |_{\coan} \leq C r^4 |f_1 - f_2|_{\ctan} $ for all $f_1, f_2 \in \ctan(\apsol)$ with sufficiently small $\ctan$ norm.

\end{itemize}

\noindent One can now assert the following.

\begin{prop}
	\label{prop:soluptocoker}
	There exists $f:= f_r \in \coan(\apsol)$ and corresponding $u:= u_r  \in \ctan(\apsol)$ defined by $u := \frac{1}{r} \mathcal R \big( f - H \big[\apsol \big]  + \frac{2}{r} \big)$ so that 
	\begin{equation}
		\label{eqn:uptofindim}
		H \big[ \mu_{rf} \big( \apsol \big) \big]  - \frac{2}{r} = - \mathcal E 
	\end{equation}
	where $\mathcal E \in \tilde{\mathcal K}$. The estimate $|f|_{\ctan} \leq Cr^{-1} E(r, \eps)$ holds, where the constant $C$ is independent of $r$, $\eps$ and is uniform in $(\tau, W, \Xi)$ provided these satisfy the requirements set out in Section \ref{sec:strategy}.
\end{prop}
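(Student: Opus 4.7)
The plan is to set up equation \eqref{eqn:cmapf} as a fixed-point problem and apply the Banach contraction mapping theorem in the closed ball $B_\rho(0) \subset \coan(\apsol)$ for a suitably chosen radius $\rho$, then recover $f$ from the resulting $u$ by the Ansatz displayed just before \eqref{eqn:cmapf}.

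More precisely, I would first define
$$\mathcal N_r(u) := -\mathcal Q \circ \mathcal R \!\left( u - H[\apsol] + \tfrac{2}{r} \right) - \mathcal H \circ \mathcal R \!\left( u - H[\apsol] + \tfrac{2}{r} \right)$$
and observe that a fixed point of $\mathcal N_r$ in $\coan(\apsol)$ yields, via Ansatz together with $\mathcal L \circ \mathcal R = \mathit{id} - \mathcal E$, a solution of $H[\mu_{rf}(\apsol)] - \frac{2}{r} = -\mathcal E(u - H[\apsol] + 2/r) \in \tilde{\mathcal K}$, which is precisely \eqref{eqn:uptofindim}. Writing $v := u - H[\apsol] + 2/r$ and using the parametrix estimate of Proposition \ref{prop:linest} together with Proposition \ref{prop:meancurvest}, one has $|\mathcal R(v)|_{\ctan} \leq C\bigl(|u|_{\coan} + E(r,\eps)\bigr)$. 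Feeding this into the Lipschitz estimates of Proposition \ref{prop:nonlinest} and using $\mathcal Q(0) = \mathcal H(0) = 0$ (these being the quadratic and curvature remainders) gives
$$|\mathcal N_r(u_1) - \mathcal N_r(u_2)|_{\coan} \leq C \bigl( r^{\nu-1} \max_i |\mathcal R(v_i)|_{\ctan} + r^4 \bigr) \, |u_1 - u_2|_{\coan} \, ,$$
$$|\mathcal N_r(0)|_{\coan} \leq C\bigl( r^{\nu-1} E(r,\eps)^2 + r^4 E(r,\eps) \bigr) \, .$$

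With $\rho := 2|\mathcal N_r(0)|_{\coan}$ (which is of order $r^{\nu-1} E(r,\eps)^2 + r^4 E(r,\eps)$, hence much smaller than $E(r,\eps)$), I would check that the contraction factor $C\bigl(r^{\nu-1}(\rho + E(r,\eps)) + r^4\bigr)$ is bounded by $1/2$ on $B_\rho(0)$. Under the standing assumption $r^3 < \eps < r^2$ and $\nu \in (1,2)$, we have $r^{\nu-1} E(r,\eps) \leq \max\{r^2, \eps^{3/2-3\nu/4}\} \to 0$ as $r \to 0$, so both the self-map and the contraction properties hold for all sufficiently small $r$. The Banach fixed-point theorem then produces a unique $u = u_r \in B_\rho(0)$ solving $u = \mathcal N_r(u)$. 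Setting $f := \frac{1}{r}\mathcal R(u - H[\apsol] + 2/r)$ and applying Proposition \ref{prop:linest} once more yields $|f|_{\ctan} \leq Cr^{-1}\bigl(|u|_{\coan} + E(r,\eps)\bigr) \leq Cr^{-1} E(r,\eps)$.

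The main obstacle I anticipate is purely bookkeeping: verifying that the two competing smallness conditions (self-map and contraction) are \emph{simultaneously} satisfied with the weight exponent $\nu \in (1,2)$ and under the coupling $r^3 < \eps < r^2$, and checking that the quadratic bound on $\mathcal Q$ applies by ensuring $|f_i|_{\ctan}$ remains below the threshold $M$ of Proposition \ref{prop:nonlinest} (which follows from $|f|_{\ctan} \leq Cr^{-1}E(r,\eps) \to 0$, hence certainly smaller than $M$ for small $r$, and in turn justifies the hypothesis $r(|f|\|\mathring B\| + \|\mathring \nabla f\|) \ll 1$ of Lemma \ref{lemma:secondffsmall}). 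Uniformity of the resulting constants in $(\tau, W, \Xi)$ is inherited directly from the uniformity clauses already established in Propositions \ref{prop:meancurvest}, \ref{prop:linest} and \ref{prop:nonlinest}, subject to the constraints on $(\tau, W, \Xi)$ described in Section \ref{sec:strategy}.
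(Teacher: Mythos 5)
Your proposal is correct and follows essentially the same contraction-mapping argument as the paper: derive the Lipschitz estimate $|\mathcal N_r(f_1) - \mathcal N_r(f_2)|_{\coan} \leq C(r^{\nu-1}E(r,\eps) + r^4)|f_1-f_2|_{\coan}$ from Propositions \ref{prop:meancurvest}, \ref{prop:linest} and \ref{prop:nonlinest}, conclude $\mathcal N_r$ is a contraction on a ball of size $\mathcal O(E(r,\eps))$ for small $r,\eps$ satisfying $r^3 \leq \eps \leq r^2$, and recover $f$ via the Ansatz. Your added care in estimating $|\mathcal N_r(0)|$ and verifying the self-map and smallness-threshold hypotheses is a harmless and welcome refinement of what the paper leaves implicit.
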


\begin{proof}
By the last three bullet points above, the map $f \mapsto \mathcal N_r(f)$ satisfies 
\begin{align*}
	|\mathcal N_r(f_1) - \mathcal N_r(f_2) |_{\coan} &\leq C \big( r^{\nu - 1} E(r, \eps) +r^4 \big) |f_1 - f_2 |_{\coan} 
\end{align*}
where $C$ is independent of $r$ and $\eps$.  Since $r^{\nu -1 } E(r, \eps)$ and $r^4$ can be made as small as desired by a sufficiently small choice of $r$ and $\eps$ with $r^3 \leq \eps \leq r^2$ it is thus true that $\mathcal N_r$ is a contraction mapping on the ball of radius $E(r, \eps)$ for such $r$ and $\eps$.  Hence a solution of \eqref{eqn:uptofindim} satisfying the desired estimate can be found.  The dependence of this solution on the parameters $(\tau, W, \Xi)$ is smooth as a natural consequence of the fixed-point process.
\end{proof}

\section{Finding Exactly CMC Surfaces}
\label{sec:balancing}

At this point, we have found a deformation of $\fullapsol$ (with $(\tau, W, \Xi)$ satisfying the usual conditions) into an almost-CMC surfaces, meaning that $H[ \fullapsol ] - \frac{2}{r} \in \tilde{\mathcal K}$ where $\tilde{\mathcal K}$ is the space defined in Definition \ref{defn:apcoker}.  It remains to show that one can make a choice of $\Gamma$ and $(\tau, W, \Xi)$ so that $H[ \fullapsol ] - \frac{2}{r} \equiv 0$ identically.  The strategy for doing so is: first to relate the components of $H[ \fullapsol ] - \frac{2}{r}$ in $\tilde{\mathcal K}$ to the geometry of $\Gamma$ and to the $\tau$ and $\Xi$ parameters by means of the so-called \emph{balancing formula}; then to use this formula to state conditions for which the equation $H[ \fullapsol ] - \frac{2}{r} \equiv 0$ can be solved.  A \emph{balanced} surface is one for which these conditions hold.

\subsection{Derivation of the Balancing Conditions}

Suppose $\Sigma := \fullapsol$ is a given initial surface and let $\Sigma_f := \mu_{rf} (\fullapsol)$ be the almost-CMC surface generated by the function $f := f_r(\tau, W, \Xi)$ that solves the constant mean curvature equation up to a co-kernel error term belonging to $\tilde{\mathcal K}$.  We now define two projection operators.  First, for each $q \in \overline{\mathcal V}$, define $\pi_q : \coan (\Sigma) \rightarrow \R^3$ as $ \pi_q (e) := \big( \pi_{q, 1}(e) , \pi_{q, 2} (e), \pi_{q, 3} (e) \big) $ where
\begin{align*}
	 \pi_{q, s}(e) &:= \int_{\Sigma_f} e \cdot  \chi_{\mathit{sph}, q}^{\sigma_{4}} J_{q, s} \, \dif \vol_g\end{align*}
Second, for each $q^\flat \in \overline{\mathcal V}^{\, \flat}$ define functions $\eta^\pm_{q^\flat}$ and $\ell^\pm_{q^\flat, s}$ with the following properties: $\eta^\pm_{q^\flat}$ equals $1$ on the $\pm$ end of the neck at $q^\flat$ and is identically zero on the other end; while $\ell^\pm_{q^\flat, s}$ equal the linear functions $x^2$ and $x^3$ on the $\pm$ end of the neck at $q^\flat$ (in the coordinates used to define this neck) and is identically zero on the other end.  Now define $\pi_{q^\flat} : \coan (\Sigma) \rightarrow \R^6$ as
$$\pi_{q^\flat} (e) := \big(\pi_{q^\flat, 0, +} (e), \pi_{q^\flat, 1, +} (e), \pi_{q^\flat, 2, +} (e) , \pi_{q^\flat, 0, -} (e) , \pi_{q^\flat, 1, -} (e), \pi_{q^\flat, 2, -} (e) \big)$$
where
\begin{align*}
	\pi_{q^\flat, 0, \pm} (e) &:= \int_{\Sigma_f} e \cdot \mathcal L_{\mathit{sph}} \big( \chi_{\mathit{neck}, q^\flat}^{\sigma_{1}} \eta^\pm_{q^\flat} \big) \, \dif \vol_g \\
	\pi_{q^\flat, s, \pm} (e) &:= \int_{\Sigma_f} e \cdot  \mathcal L_{\mathit{sph}} \big(  \chi_{\mathit{neck}, q^\flat}^{\sigma_{1}} \ell_{q^\flat, s}^\pm \big) \, \dif \vol_g \qquad s=1, 2 \, .
\end{align*}

The following lemma gives the action of the projection operators $\pi_{q, s}$ and $\pi_{q^\flat, s, \ast}$ on the basis for $\tilde{\mathcal K}$ given in definition \ref{defn:apcoker}.  It implies that if $e \in \tilde{\mathcal K}$ and $\pi_q(e) = \pi_{q^\flat} (e) = 0$ for all $q \in \overline{\mathcal V}$ and $q^\flat \in \overline{\mathcal V}^{\, \flat}$ then $e=0$.  The proof is a straightforward computation. 

\begin{lemma}
	\label{lemma:projop}
	The projection operators $\pi_{q, s}$ and $\pi_{q^\flat, s, \ast}$ satisfy the following properties.
	\begin{align*}
		\pi_{q, s} \big(  \chi_{\mathit{sph}, q' }^{\sigma_1} J_{q' , s '} \big) &= \delta_{q q'} C r^2 ( \delta_{s s'} + o(1)) \\
		\pi_{q, s} \big(\chi_{\mathit{sph}, q'}^{\sigma_1} \mathcal L_{\mathit{sph}} (   \eta_{q', i} \chi_{\mathit{neck}, p_i}^{\sigma_1} ) \big) &= 0 \\
		\pi_{q, s} \big(   \chi_{\mathit{sph}, q'}^{\sigma_1} \mathcal L_{\mathit{sph}} (   \eta_{q', i} \chi_{\mathit{neck}, p_i}^{\sigma_1} \ell_{q', i}^{s'}) \big) &= 0 \\
		\pi_{q^\flat, s, \pm} \big(  \chi_{\mathit{sph}, q' }^{\sigma_1} J_{q' , s '} \big) &= 
		\begin{cases}
			\parbox{1.625in}{$\mathcal O(r^{-2})$} &\mbox{\small $q^\flat$ is adjacent on the $\pm$ side of $q'$} 
			\\
			0 & \mbox{\small otherwise}
		\end{cases} \\[1ex]
		\pi_{q^\flat, s, \pm}  \big(\chi_{\mathit{sph}, q'}^{\sigma_1} \mathcal L_{\mathit{sph}} (   \eta_{q', i} \chi_{\mathit{neck}, p_i}^{\sigma_1} ) \big) &= 
		\begin{cases}
			\parbox{1.625in}{$C r^{-4} \sigma_1^{-2} ( \delta_{q^\flat p_i} \delta_{s 0}  + o(1) )$} &\mbox{\small $q^\flat$ is adjacent on the $\pm$ side of $q'$} \\
			0 &\mbox{\small otherwise}
		\end{cases} \\[1ex]
		\pi_{q^\flat, s, \pm} \big(   \chi_{\mathit{sph}, q'}^{\sigma_1} \mathcal L_{\mathit{sph}} (   \eta_{q', i} \chi_{\mathit{neck}, p_i}^{\sigma_1} \ell_{q', i}^{s'}) \big) &= 
		\begin{cases}
			\parbox{1.625in}{$C r^{-4} \sigma_1^{-2} ( \delta_{q^\flat p_i} \delta_{s s'}  + o(1) )$} &\mbox{\small $q^\flat$ is adjacent on the $\pm$ side of $q'$} \\
			0 &\mbox{\small otherwise}
		\end{cases}
	\end{align*}
\end{lemma}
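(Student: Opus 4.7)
The proof is a direct computation, exploiting (a) the localization properties of the cut-off functions, (b) the fact that the $J_{q,s}$ lie in the kernel of $\mathcal L_{\mathit{sph}}$, and (c) standard orthogonality of coordinate-type functions on $\Sph^2$ and on the scaled catenoid. Throughout I would first pull each integrand back from $\Sigma_f$ to $\apsol$ (picking up only a Jacobian of $1+\mathcal O(r^\nu)$ by Proposition~\ref{prop:soluptocoker} and Lemma~\ref{lemma:secondffsmall}), and then pass from $\apsol$ to the underlying Euclidean building blocks via the normal coordinate chart, so that all six entries reduce to integrals on $\Sph^2$ or on the scaled catenoid up to a multiplicative correction of $1+o(1)$ coming from the metric expansion~\eqref{eqn:metricexp}, the normal perturbation $f$, and the graphing function $G_{\vec q,\vec\eps}$.

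Next I would dispatch the purely vanishing identities by support considerations. The factor $\chi_{\mathit{sph},q}^{\sigma_4}$ in the definition of $\pi_{q,s}$ forces the integrand to be supported on the spherical piece $S_q$; similarly $\mathcal L_{\mathit{sph}}(\chi_{\mathit{neck},q^\flat}^{\sigma_1}\cdot)$ is supported in a $\sigma_1$-annulus inside the neck at $q^\flat$. This immediately gives the $\delta_{qq'}$ in the first identity, the vanishing outside adjacency in the fourth through sixth identities, and zeroes out all pairings in which the neighbouring building block $S_{q'}$ lies on the opposite side of $q^\flat$. For the second and third identities, after integrating by parts on $S_q$ to move $\mathcal L_{\mathit{sph}}$ onto $J_{q,s}$ (so the bulk term vanishes since $\mathcal L_{\mathit{sph}} J_{q,s}=0$), the only surviving boundary contributions are concentrated in the overlaps of $\chi_{\mathit{sph},q}^{\sigma_4}$ and $\chi_{\mathit{neck},p_i}^{\sigma_1}$, which I arrange to be disjoint by the choice $\sigma_1<\sigma_4$.

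For the non-zero pairings I would perform the local computations explicitly. The first identity reduces to $r^{2}\int_{\Sph^2} J_s J_{s'}\,\dif\vol_{\Sph^2}=Cr^2(\delta_{ss'}+o(1))$ by the classical $L^2$-orthogonality of the coordinate functions, with $o(1)$ absorbing all curvature and graphing-function corrections. The fourth identity is computed on the neck: $\mathcal L_{\mathit{sph}}(\chi^{\sigma_1}_{\mathit{neck},q^\flat}\eta^\pm_{q^\flat})$ is supported on an annulus of scale $\sigma_1$ in the neck coordinates, where $J_{q',s'}$ is essentially constant, and the two derivative drops in $\mathcal L_{\mathit{sph}}$ together with the normalization to the scaled catenoid produce the overall scaling $\mathcal O(r^{-2})$. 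For the last two identities I integrate by parts once more so that both $\mathcal L_{\mathit{sph}}$'s land against each other on the neck; the resulting expressions reduce to $\int\eta^\pm\eta^\pm$ respectively $\int\eta^\pm\ell^s\ell^{s'}$ restricted to the narrowest slice, which by the orthogonality of the coordinate $x^2,x^3$ functions on a circle give the Kronecker deltas $\delta_{s0}$ and $\delta_{ss'}$ with a common prefactor $Cr^{-4}\sigma_1^{-2}$ accounting for the $r^{-2}$ scaling of each $\mathcal L_{\mathit{sph}}$ on $r\eps N$ and the $\sigma_1^{-2}$ that the gradients of $\chi^{\sigma_1}$ contribute.

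The main nuisance, rather than any genuine obstacle, is the bookkeeping of the $r$- and $\eps$-powers across three geometries (ambient $g$, building block in Euclidean coordinates, catenoid in its own scale) and verifying that every error contribution — the deviation of $g$ from $\mathring g$, the normal perturbation $f$, the generating function $G_{\vec q,\vec\eps}$ and the catenoid interpolation in Step~2 of the construction — is absorbed into the $o(1)$ factors uniformly in $(\tau,W,\Xi)$ admissible in the sense of Section~\ref{sec:strategy}. This uniformity follows from the corresponding uniformity statements in Propositions~\ref{prop:meancurvest}, \ref{prop:linest} and~\ref{prop:nonlinest}, together with the fact that the cut-off scales $\sigma_1,\ldots,\sigma_4$ were fixed independently of these parameters.
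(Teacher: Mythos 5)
The paper itself proves this lemma in a single line (``The proof is a straightforward computation.''), so there is no detailed argument to hold your sketch against; what you have written is a reasonable reconstruction of that computation, and the ingredients you isolate --- localization by the cut-off scales, the kernel identity $\mathcal L_{\mathit{sph}}J_{q,s}=0$, angular $L^2$-orthogonality on $\Sph^2$ and on the circle cross-sections of the neck, and uniformity in $(\tau,W,\Xi)$ inherited from the earlier estimates --- are the right ones.

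One point should be fixed. The second and third identities assert \emph{exact} zero, and your integration-by-parts step does not quite deliver that: $\mathcal L_{\mathit{sph}}$ is self-adjoint with respect to $\dif\vol_{\Sph^2}$ on the model sphere, not with respect to $\dif\vol_g$ on $\Sigma_f$, and after pulling back to $\Sph^2$ the Jacobian density enters the integrand, so the ``bulk term'' $\mathcal L_{\mathit{sph}}$ lands on a product that is no longer in $\ker\mathcal L_{\mathit{sph}}$. The exact vanishing is in fact forced by pure support disjointness and needs no integration by parts: $\mathcal L_{\mathit{sph}}\big(\eta_{q',i}\chi_{\mathit{neck},p_i}^{\sigma_1}\big)$ is supported inside $B_{\sigma_1}(p_i)$ (note $\mathcal L_{\mathit{sph}}(1)=2\neq 0$, so the support is the whole ball where $\eta\chi\neq 0$, not merely the transition annulus), while $\chi_{\mathit{sph},q}^{\sigma_4}$ is identically zero on $B_{\sigma_4/2}(p_i)\supset B_{\sigma_1}(p_i)$ once the radii are nested as the paper requires (``supports of the gradients of $\chi^{\sigma_i}$ and $\chi^{\sigma_j}$ do not overlap for $i\neq j$''). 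You reach the same disjointness at the end of your argument, but the detour through integration by parts adds nothing and, taken literally, only gives approximate vanishing. As for the power counting in the fourth through sixth identities, the orders $\mathcal O(r^{-2})$ and $Cr^{-4}\sigma_1^{-2}$ you arrive at agree with the lemma and with the matrix \eqref{eqn:balmatrix} computed later, but the phrase ``the two derivative drops in $\mathcal L_{\mathit{sph}}$ together with the normalization to the scaled catenoid produce the overall scaling'' is a heuristic, not a derivation; a complete argument would have to commit to a single normalization convention for $\mathcal L_{\mathit{sph}}$, $\ell_{q^\flat,s}^\pm$ and the measure, and then track the powers of $r$ and $\sigma_1$ explicitly. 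This is the softest part of your sketch, as it is of the paper's.
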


The next task is to use the expansions of the mean curvature and the estimates of the function $f$ to derive a formula relating $\pi_\ast \big( H[ \Sigma_f ] - 2/r \big)$ to the geometry of $\Sigma$. 

\begin{prop}
	\label{prop:integralexp}
	The mean curvature of $\Sigma_{f}$ satisfies the following formul\ae\ .  First,
	\begin{subequations}
	\label{eqn:balform}
	\begin{equation}
		\begin{aligned}
			\label{eqn:sphbalformula}
			\pi_{q, s} \left(  H[ \Sigma_f ] - \frac{2}{r} \right) &= \! \sum_{\substack{\text{\tiny adjoining} \\ \text{\tiny necks}}} \!\!\! C_1 r \eps_{q^\flat} [ X_{q^\flat} ]_s  - C_2 r^4 \frac{\partial R}{\partial x^s} (q) \\[-1ex]
			&\qquad + \mathcal O(\eps r^3) + \mathcal O( r \eps^2) +  \mathcal O(  r^2  E(r, \eps))
		\end{aligned}
	\end{equation} 
	where the sum is taken over necks adjoining $q$ and $[X_{q^\flat}]_s$ is the $s$-component of the unit vector at $q$ pointing in the direction of the neck at $q^\flat$.  Here $C_1, C_2$ are numerical constants and $R$ is the scalar curvature of $M$.  Second,
	\begin{equation}
		\begin{aligned}
			\label{eqn:neckbalformula}
			\pi_{q^\flat, s, \pm} \left(  H[ \Sigma_f ] - \frac{2}{r} \right)  &=  \sum_{i=1}^6 a_i^\pm M_{is} + r^{-3} \sigma_1^{-2} \Psi( \Xi_{q^\flat} ) +  \mathcal O(r^{-1} \eps^2) + \mathcal O( \sigma_1^{\nu - 2} r^{-1} E(r, \eps))
		\end{aligned}
	\end{equation}
	\end{subequations}
where $a_i^\pm$ is a non-zero multiple of the $i^{\mathit{th}}$ component of $\Xi_{q^\flat}$, the function $\Psi$ is a function bounded uniformly by an $r$-independent constant times $\| \Xi_{q^\flat} \|^2$, and the matrix $M_{is}$ is given in \eqref{eqn:balmatrix} below.
\end{prop}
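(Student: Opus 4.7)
The strategy is to derive both formulae from the first variation identity \eqref{eqn:balancing} applied to carefully chosen ambient vector fields $V$, using the fact that the test functions $\chi_{\mathit{sph},q}^{\sigma_4} J_{q,s}$ and $\mathcal L_{\mathit{sph}}(\chi^{\sigma_1}_{\mathit{neck}, q^\flat} \eta^\pm_{q^\flat})$, $\mathcal L_{\mathit{sph}}(\chi^{\sigma_1}_{\mathit{neck}, q^\flat} \ell^\pm_{q^\flat, s})$ are (up to negligible corrections) the normal components of approximate Killing vector fields on the Euclidean sphere and catenoid model spaces. The bulk terms in the first variation will produce the scalar curvature gradient and the neck/displacement contributions, while the surplus boundary terms will feed into the error estimates.

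For the spherical projection \eqref{eqn:sphbalformula}, I would take $V := V_{q,s}$ to be the coordinate vector field $\partial/\partial x^s$ in the geodesic normal chart centered at $q$, multiplied by a cut-off supported on the region $\mathit{supp}(\chi_{\mathit{sph},q}^{\sigma_4})$. Applying \eqref{eqn:balancing} to the piece of $\Sigma_f$ contained in this region, bounded by small surfaces $Q_{q^\flat}$ cutting off each adjacent neck, one has
\begin{equation*}
\pi_{q,s}\bigl( H[\Sigma_f] - \tfrac{2}{r} \bigr) = \underbrace{\int_{\Sigma_f\cap\, \mathrm{supp}(V)} \bigl(H[\Sigma_f] - \tfrac{2}{r}\bigr)\, g(N,V)}_{\text{bulk}} - \underbrace{\sum_{\text{necks}} \!\int_{Q_{q^\flat}}\!\!\bigl(\cdots\bigr)}_{\text{flux}} + \text{(cutoff errors)}.
\end{equation*}
The flux integrals, evaluated on the optimally matched catenoid of waist $r\eps_{q^\flat}$, collapse to $C_1 r \eps_{q^\flat} [X_{q^\flat}]_s$ plus $\mathcal O(r\eps^2)$ by a direct computation with the catenoid Jacobi field. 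The bulk term, once I substitute the metric expansion \eqref{eqn:metricexp} and use $\mathit{vol}(\Sigma\cap \mathrm{supp}(V)) = \mathcal O(r^2)$ plus $\int J_{q,s}^2 = \mathcal O(r^2)$, collapses to $-C_2 r^4 \partial R/\partial x^s(q)$ after using the standard fact that $\int_{\Sph^2} x^s x^i x^j x^k = 0$ unless the indices pair, and that the trace of the curvature contribution against $J_{q,s}$ extracts precisely $\nabla R$. The terms involving $f$ and $\mathcal H$ are bounded by $\mathcal O(r^2 E(r,\eps))$ using Proposition \ref{prop:soluptocoker} together with the $\ctan$ estimates for $f$.

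For the neck projection \eqref{eqn:neckbalformula}, the plan is different: the six test functions $\mathcal L_{\mathit{sph}}(\chi^{\sigma_1}_{\mathit{neck}, q^\flat} \eta^\pm_{q^\flat})$ and $\mathcal L_{\mathit{sph}}(\chi^{\sigma_1}_{\mathit{neck}, q^\flat} \ell^\pm_{q^\flat, s})$ are exactly the data that detects how the neck in the constructed surface differs from the optimal catenoid with parameters $\Xi_{q^\flat} = 0$, up to Euclidean-vs-Riemannian corrections. I would integrate by parts to move $\mathcal L_{\mathit{sph}}$ off the test function and onto $H[\Sigma_f] - 2/r$, use Proposition \ref{prop:nonlinest} to replace $\mathcal L_{\mathit{sph}}$ by $\mathcal L$ up to error $\mathcal O(r^{-1}\eps^2)$, and then use the explicit dependence of the catenoidal neck on the parameters $d_i,\theta_j,\eps$ to show that in the limit $\Xi_{q^\flat}\to 0$ the result is a non-degenerate invertible linear map in $\Xi_{q^\flat}$; this defines the matrix $M_{is}$. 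The quadratic piece $\Psi(\Xi_{q^\flat})$ arises from the higher-order Taylor expansion of the graphs of the rotated, translated catenoid over the $(x^2,x^3)$-plane, and the $r^{-3}\sigma_1^{-2}$ prefactor comes from the scaling of $\mathcal L_{\mathit{sph}}$ (two derivatives producing $\sigma_1^{-2}$) and the area factor $r^{-1}$ from the neck scale.

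The main obstacle I expect is bookkeeping the several competing small parameters: the curvature orders in \eqref{eqn:metricexp}, the neck scale $r\eps$, the cutoff scales $\sigma_1,\ldots,\sigma_4$, and the size of $f$ from Proposition \ref{prop:soluptocoker}, together with making sure each contribution is attributed correctly between the leading term and the error. In particular, for \eqref{eqn:sphbalformula} one must verify that the $\mathcal O(\|Y\|^3)$ term in the metric expansion, which a priori could produce a contribution of order $r^5$ involving $\nabla\mathrm{Rm}$, actually integrates against $J_{q,s}$ to yield exactly $\nabla R$ rather than a more general curvature derivative — this is the key point that links Condition (2) to the scalar curvature rather than to Ricci or Riemann tensor contractions. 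The remaining error terms $\mathcal O(r^2 E(r,\eps))$ and $\mathcal O(\sigma_1^{\nu-2} r^{-1} E(r,\eps))$ follow directly from combining the size of $f$ with the Lipschitz estimates of $\mathcal Q$ and $\mathcal H$ from Proposition \ref{prop:nonlinest}.
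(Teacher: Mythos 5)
Your overall plan is sound, and for the spherical formula \eqref{eqn:sphbalformula} it is actually a genuinely different route from what the paper does in the proof itself. The paper's proof computes $\pi_{q,s}(H[\Sigma_f]-2/r)$ directly by plugging the curvature expansion of the mean curvature, equation \eqref{eqn:geomcexp} (which is the content of Lemma \ref{lemma:basicexp}, refined), into the integral $\int (H-2/r)\,\chi^{\sigma_4}_{\mathit{sph},q}J_{q,s}\,\dif\vol_g$; the neck term then comes out as an interior integral of the mean-curvature defect concentrated in the transition annuli, not as a flux. You instead invoke the balancing identity \eqref{eqn:balancing}, which the paper only sketches heuristically in the introduction, and recover the neck term as a boundary flux across sections $Q_{q^\flat}$. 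These two computations are related by the divergence theorem and both give the right answer, but the direct expansion has the bookkeeping advantage that the weighted Hölder estimates of $f$, $\mathcal Q$, and $\mathcal H$ from Propositions \ref{prop:soluptocoker} and \ref{prop:nonlinest} slot in without further work, whereas the flux approach requires you to also control the cutoff boundary terms and the first-variation term $\tfrac{\dif}{\dif t}(\mathit{Area}-h\mathit{Vol})|_0$ against an only approximately Killing $V$ --- a step you do not detail.

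Two places where your plan as stated is slightly off. First, your displayed identity for the spherical projection reads as $\text{bulk}=\text{bulk}-\text{flux}+\text{errors}$: the left-hand side $\pi_{q,s}(H-2/r)$ already equals (essentially) $\int_{\mathcal U}(H-2/r)g(N,V)$ once $g(N,V)\approx J_{q,s}$ on $\mathcal U$, so what should sit on the right is the area/volume variation term plus the boundary flux from \eqref{eqn:balancing}, not another copy of $\int(H-2/r)g(N,V)$. The $-C_2r^4\nabla R$ contribution comes from that variation term (the failure of $V$ to be Killing), not from a ``bulk'' term of the type you wrote. Second, for \eqref{eqn:neckbalformula} the proposal to move $\mathcal L_{\mathit{sph}}$ off the test function and onto $H[\Sigma_f]-2/r$ makes the computation worse, not better: the leading part of $H-2/r$ near the neck is already of the form $\sum_i a_i^\pm\mathcal L_{\mathit{sph}}(\tilde\Phi_i)$ (where $\tilde\Phi_\Xi=\sum a_i^\pm\tilde\Phi_i$ is the graph perturbation produced by the catenoid parameters $\Xi_{q^\flat}$), so pairing directly with $\mathcal L_{\mathit{sph}}(\chi^{\sigma_1}\ell^\pm_{q^\flat,s})$ already yields the bilinear form whose matrix is $M_{is}$ in \eqref{eqn:balmatrix}; integrating by parts would put a fourth-order operator on the graphing function. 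Moreover $\mathcal L_{\mathit{sph}}$ is self-adjoint for the Euclidean area element, not for $\dif\vol_g$, so the integration by parts also needs a commutator estimate you don't mention. The paper instead identifies the graph decomposition $\tilde G=G+\tilde\Phi_\Xi+\tilde f$ explicitly, reads off the linear part in $\Xi_{q^\flat}$, and computes $M_{is}$ by separating variables in polar coordinates on the annulus; this is the step you should replace your integration-by-parts with. Finally, you correctly flag that the contraction of the $\bar\nabla\riem$ term against $J_{q,s}$ must single out $\nabla R$ rather than a more general tensor --- this uses the contracted second Bianchi identity together with the vanishing of odd spherical moments, and it is the one computational point on which the whole formula hinges, so it deserves to be carried out rather than merely asserted.
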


\begin{proof}
	The derivation of formula \eqref{eqn:sphbalformula} is identical to the derivation given in \cite{memazzeo}.  This derivation is based on the fact that the expansion of the mean curvature in terms of the background geometry given in equation \ref{eqn:geomcexp} reads
\begin{equation}
	\label{eqn:geomcexp}
	\begin{aligned}
		H &= \big( 1 + \tfrac{1}{6} \riem (\mathring N, Y, \mathring N, Y) + \tfrac{1}{12} \bar \nabla_Y \riem (\mathring N, Y, \mathring N, Y) \big) \mathring H \\
		&\qquad - \big( \tfrac{1}{3} \riem (E_i, Y, E_j, Y) + \tfrac{1}{6} \bar \nabla_Y \riem (E_i, Y, E_j, Y) \big) \mathring B^{ij} \\
		&\qquad - \tfrac{2}{3} \ric (Y, \mathring N) - \tfrac{1}{2} \bar \nabla_Y \ric(Y, \mathring N) + \tfrac{1}{12} \bar \nabla_{\mathring N} \ric(Y, Y) - \tfrac{1}{6} \bar \nabla_{\mathring N} \riem ( \mathring N, Y, \mathring N, Y) \\
		&\qquad + \mathcal O \big( \| Y \|^3 (1 +  \| Y \| \| \mathring B \|) \big)  \, .
	\end{aligned}
\end{equation}
which yields the leading terms in formula \eqref{eqn:sphbalformula} plus error terms when integrated over $\Sigma_f$.    

We therefore proceed to the derivation of formula \ref{eqn:neckbalformula}.  Choose $q^\flat \in \overline{\mathcal V}^{\, \flat}$ and consider the neck region $\mathcal N_{q^\flat}^{\sigma_1}$.  Then $\pi_{q^\flat, s, \pm} \left(  H[ \Sigma_f ] - \frac{2}{r} \right)$ is an integral over the transition region of $\mathcal N_{q^\flat}^{\sigma_1}$ and its neighbour on the $\pm$ side.  let us say that the neighbouring sphere is centered at $q \in \overline{\mathcal V}$.  Let $p$ be the point in this sphere where the neck is attached.  Using Lemma \ref{lemma:normalgraph} and Lemma \ref{lemma:prepestimates} as well as the definition of $\Sigma_f$, we can now write $\Sigma_f \cap \mathcal T^{\sigma_1, \pm}_{q^\flat}$ as a normal graph with respect to the Euclidean metric over an annulus $\mathcal A$ of radii $\mathcal O(\sigma_1^2)$ about $p$.  The graphing function is of the form $\tilde r G : \mathcal A \rightarrow \R$ with $\tilde G = G + \tilde \Phi_{\Xi} + \tilde f $ where $G$ is the generating function of the building block surface corresponding to $q$ and $\tilde \Phi_{\Xi}$ is the deformation of the surface $\Sigma \cap \mathcal T^{\sigma_1, \pm}_{q^\flat}$ caused by the non-optimal matching to the neck region with parameters $\Xi_{q^\flat}$, while $\tilde f$ is the small deformation of $f$ guaranteed by Lemma \ref{lemma:normalgraph}. The function $\tilde \Phi_{\Xi}$ is, to highest order in the magnitude of $\Xi$, a linear combination of the form $\sum_{i=1}^6 a_i^\pm \tilde \Phi_i$ where $\tilde \Phi_i := \chi_{\mathit{neck}, q^\flat}^{\sigma_1} \mathring g (X_i, \mathring N)$ and $X_i$ is one of the translation, rotation and dilation vector fields, and $a_i^\pm$ is a non-zero constant multiple of the parameter in $\Xi_{q^\flat}$ that corresponds to it.  We define: $X_1, X_2, X_3$ generate translation along the $x^1, x^2, x^3$ directions, respectively, in the coordinates used to define the neck; $X_4$ generates dilation; $X_5, X_6$ generate rotations in the $x^2, x^3$-directions, respectively. We also set $\ell_{q^\flat, 0}^\pm :=  \eta^\pm_{q^\flat}$ for convenience.  Hence
\begin{align*}
	\pi_{q^\flat, s, \pm} \left(  H[ \Sigma_f ] - \frac{2}{r} \right) &= \sum_{i=1}^6  a_i^\pm (\Xi_{q^\flat})\, r \! \int_{\mathcal A}  \mathcal L_{\mathit{sph}} ( \tilde  \Phi_i )  \mathcal L_{\mathit{sph}} \big( \chi_{\mathit{neck}, q^\flat}^{\sigma_{1}}  \ell_{q^\flat, s}^\pm \big)   \,  \dif \mathrm{Vol}_g \\
	&\qquad + r^{-3} \sigma_1^{-2} \Psi( \Xi_{q^\flat} ) +  \mathcal O(r^{-1} \eps^2) + \mathcal O( \sigma_1^{\nu - 2} r^{-1} E(r, \eps))
\end{align*}
where the function $\Psi( \cdot)$ is a quadratic remainder term, bounded uniformly by a constant independent of $r$ and $\eps$. To conclude, we define the matrix $M_{is} := r  \int_{\mathcal A}  \mathcal L_{\mathit{sph}} ( \tilde  \Phi_i )  \mathcal L_{\mathit{sph}} \big( \chi_{\mathit{neck}, q^\flat}^{\sigma_{1}}  \ell_{q^\flat, s}^\pm \big)   \,  \dif \mathrm{Vol}_g $ and compute its entries.  We use the fact that in polar coordinates in $\mathcal A$ we have $\mathcal L_{\mathit{sph}} := L_{\mathit{rad}} + \frac{1}{r^2} \frac{\partial^2}{\partial \theta^2}$ where $L_{\mathit{rad}}$ is a radial operator, and we can write $\tilde \Phi_i$ and $\chi_{\mathit{neck}, q^\flat}^{\sigma_{1}}  \ell_{q^\flat, s}^\pm $ as products of radial functions times $1$ or $\cos(\theta)$ or $\sin(\theta)$.  Consequently 
\begin{equation}
	\label{eqn:balmatrix}
	M_{is} = 
	\begin{pmatrix}
		C_1 r^{-3} \sigma_1^{-2} & & & C_1' r^{-3} \sigma_1^{-2} \log(1/\sigma_1)& & \\
		& C_2 r^{-3} \sigma_1^{-3}& & & C_2' r^{-3} \sigma_1^{-1}& \\
		& & C_3 r^{-3} \sigma_1^{-3}& & & C_3' r^{-3} \sigma_1^{-1}
	\end{pmatrix}
\end{equation}
where $C_i, C_i'$ are constants and all other entries are zero.  This is what we wanted to show. 
\end{proof}

\subsection{Balanced Almost-CMC Surfaces}

Suppose $\Sigma := \fullapsol$ is once again a given initial surface and let $\Sigma_f := \mu_{rf} (\fullapsol)$ be the almost-CMC surface generated by the function $f := f_r(\tau, W, \Xi)$ that solves the constant mean curvature equation up to a co-kernel error term belonging to $\tilde{\mathcal K}$.  The extra step needed for being able to deform $\Sigma$ into an exactly CMC surface is to find a network of curves $\Gamma$ and a value for the $(\tau, W, \Xi)$ parameters for which the projections $\pi_{q, s} \big( H[\Sigma_f] - 2/r \big) $ and $\pi_{q^\flat, s, \pm} \big( H[\Sigma_f] - 2/r \big)$ can be shown to vanish for all $q \in \overline{\mathcal V}$ and $ q \in \overline{\mathcal V}^{\, \flat}$.  We will not be able to give a definitive answer to this question; rather we give a simple, testable set of conditions on $\Gamma$ such that if these conditions hold, then the deformation to a CMC surface is possible.  

\paragraph*{Balancing equations for the neck regions.}  We first consider the equations that must hold in the neck regions of $\Sigma_f$ and it turns out that these are easy to satisfy.  There are two sets of equations for each $q^\flat \in \overline{\mathcal V}^{\, \flat}$ corresponding to the $\pm$ ends of the neck $\mathcal N_{q^\flat}^{\sigma_1}$ and these are given in equation \eqref{eqn:neckbalformula}.  Now the quantity $a^\pm_i$ is a constant multiple of the $i^{\mathit{th}}$ component of $\Xi_{q^\flat}$.  By the nature of translation, dilation and rotation of the catenoid, the functions $\tilde \Phi_i$ have certain symmetries: $\tilde \Phi_1$, $\tilde \Phi_5$, $\tilde \Phi_6$, corresponding to translation in the $x^1$-direction and rotation in the $x^2$- and $x^3$-directions, are odd functions with respect to the center of the catenoid; whereas the functions $\tilde \Phi_2$, $\tilde \Phi_3$, $\tilde \Phi_4$, corresponding to translation in the $x^2$- and $x^3$-directions and dilation, are even functions with respect to the center of the catenoid.  Consequently if we set $a_i^+ := a_i $ for each $i = 1, \ldots, 6$, then 
$$a_1^- = - a_1 \qquad a_2^- = a_2 \qquad a_3^- = a_3 \qquad a_4^- = a_4 \qquad a_5^- = - a_5 \qquad \mbox{and}  \qquad a_6^- = - a_6 \, .$$
Therefore the two sets of equations pertaining to the neck $N_{q^\flat}^{\sigma_1}$ can be combined and now read
\begin{equation}
	\label{eqn:neckbalsolution}
	\begin{aligned}		
		0 &=  \sum_{i=1}^6 a_i \widehat M_{is} + r^{-3} \sigma_1^{-2} \Psi( \Xi_{q^\flat} ) +  \mathcal O(r^{-1} \eps^2) + \mathcal O( \sigma_1^{\nu - 2} r^{-1} E(r, \eps))
	\end{aligned}
\end{equation}
where
\begin{equation*}
	\widehat M_{is} := \left(
	\begin{matrix}
		C_1 r^{-3} \sigma_1^{-2} & & & C_1' r^{-3} \sigma_1^{-2} \log(1/\sigma_1)& & \\
		& C_2 r^{-3} \sigma_1^{-3}& & & C_2' r^{-3} \sigma_1^{-1}& \\
		& & C_3 r^{-3} \sigma_1^{-3}& & & C_3' r^{-3} \sigma_1^{-1} \\
		- C_1 r^{-3} \sigma_1^{-2} & & & C_1' r^{-3} \sigma_1^{-2} \log(1/\sigma_1)& & \\
		& C_2 r^{-3} \sigma_1^{-3}& & & - C_2' r^{-3} \sigma_1^{-1}& \\
		& & C_3 r^{-3} \sigma_1^{-3}& & & - C_3' r^{-3} \sigma_1^{-1}
	\end{matrix} \right) \, .
\end{equation*}
Since $\widehat M_{is}$ is an invertible matrix and $\sigma_1 = \mathcal O(r)$ we can solve equation \eqref{eqn:neckbalsolution} for $(a_1, \ldots, a_6)$ near $(0, \ldots, 0)$ when $r$ is sufficiently small.  The solution depends parametrically on the $(\tau, W)$-parameters of the initial surface $\fullapsol$.  We will continue to denote by $\Sigma_f$ the deformed initial surface with the choice of $\Xi$ determined here.  Moreover, one can check that the parameter values $\Xi_{q^\flat}$ that follow from our solution of \eqref{eqn:neckbalsolution} generate an $o(r)$ deformation of $\tilde \Sigma_r(\tau, W, 0)$ in the $C^{2, \alpha}_\nu$ norm.  Let us denote these parameter values collectively by $\Xi^\ast := \Xi^\ast(\Gamma, \tau, W)$ and the surface we get as $\Sigma_r (\Gamma, \tau, W, \Xi^\ast)$.

\paragraph*{Balancing equations for the spherical constituents.}  We now turn to the equations that must hold in each spherical constituent of $\Sigma_f$.  Quoting equation \eqref{eqn:balform}, we know that we must find $(\tau, W)$ so that
\begin{equation}
	\label{eqn:finalbalancing}
	0 = \! \sum_{\substack{\text{\tiny adjoining} \\ \text{\tiny necks}}} \!\!\! C_1 r \eps_{q^\flat} [ N_{q^\flat} ]_s  - C_2 r^4 \frac{\partial R}{\partial x^s} (q) + \mathcal O(\eps r^3) + \mathcal O( r \eps^2) +  \mathcal O(  r^2  E(r, \eps)) \qquad \forall \; q \in \overline{\mathcal V}
\end{equation} 
where the sum is taken over necks adjoining $q$ and $[N_{q^\flat}]_s$ is the $s$-component of the unit vector at $q$ pointing in the direction of the neck at $q^\flat$.  At this point, we make use of the existence conditions that we have imposed on $\Gamma$ and explained in the introduction.  We have shown that the leading-order terms in \eqref{eqn:finalbalancing} vanish when the curves in $\Gamma$ and the choice of neck sizes satisfy Conditions (2) -- (3). By Condition (1) that the error term involved in approximating the leading-order part by $\nabla_{\dot \gamma} \dot \gamma - \Omega r^2 \nabla R \circ \gamma$ is as small or smaller than the $\mathcal O(\eps r^3) + \mathcal O(r \eps^2) + \mathcal O(r^2 E(r, \eps))$ terms already present above.  Here $\Omega := C_2/C_1$.   We obtain a discrete family of balanced surfaces $\Sigma_{r_j} ( \Gamma^\ast, \tau^\ast, 0, \Sigma^\ast(\Gamma^\ast, \tau^\ast, 0) ) $ because only for $r $ belonging to a discrete family of shrinking intervals converging to zero is it possible to place an integer number of spheres and necks along each curve in $\Gamma$ to within an error smaller than the leading order terms of  \eqref{eqn:finalbalancing}.   Finally, the non-degeneracy condition, Condition (4), guarantees that the map taking the remaining $W$-parameters onto the image space of all the $\pi_{q, s}$ projections is locally surjective.  Hence for sufficiently small $r$, we can find $W:= W^\ast$ near zero so that the right hand side of  \eqref{eqn:finalbalancing} vanishes identically.  Our desired CMC surface is $\Sigma_r(\Gamma^\ast, \tau^\ast, W^\ast, \Xi^\ast(\Gamma^\ast, \tau^\ast, W^\ast) )$.   \hfill \qedsymbol

\bigskip

\renewcommand{\baselinestretch}{1}
\small

\bibliography{cmcnet}

\providecommand{\bysame}{\leavevmode\hbox to3em{\hrulefill}\thinspace}
\providecommand{\MR}{\relax\ifhmode\unskip\space\fi MR }
\providecommand{\MRhref}[2]{%
  \href{http://www.ams.org/mathscinet-getitem?mr=#1}{#2}
}
\providecommand{\href}[2]{#2}
\begin{thebibliography}{10}

\bibitem{memazzeo}
A.~Butscher and R.~Mazzeo, \emph{C{M}{C} hypersurfaces condensing to geodesic
  segments and rays in {R}iemannian manifolds}, preprint:.

\bibitem{gbkkrs}
K.~Grosse-Brauckmann, N.~Korevaar, R.~Kusner, J.~Ratzkin, and J.~Sullivan,
  \emph{Coplanar k-unduloids are nondegenerate}, Preprint: arXiv:0712.1865.

\bibitem{gks}
K.~Grosse-Brauckmann, R.~Kusner, and J.~Sullivan, \emph{Triunduloids: embedded
  constant mean curvature surfaces with three ends and genus zero}, J. Reine
  Angew. Math. \textbf{564} (2003), 35--61.

\bibitem{gks2}
\bysame, \emph{Coplanar constant mean curvature surfaces}, Comm. Anal. Geom.
  \textbf{15} (2007), no.~5, 985--1023. \MR{MR2403193}

\bibitem{kapouleas7}
Nikolaos Kapouleas, \emph{Complete constant mean curvature surfaces in
  {E}uclidean three-space}, Ann. of Math. (2) \textbf{131} (1990), no.~2,
  239--330.

\bibitem{kapouleas6}
\bysame, \emph{Compact constant mean curvature surfaces in {E}uclidean
  three-space}, J. Differential Geom. \textbf{33} (1991), no.~3, 683--715.

\bibitem{korevaarkusner1}
N.~Korevaar and R.~Kusner, \emph{The global structure of constant mean
  curvature surfaces}, Invent. Math. \textbf{114} (1993), no.~2, 311--332.

\bibitem{kks}
N.~Korevaar, R.~Kusner, and B.~Solomon, \emph{The structure of complete
  embedded surfaces with constant mean curvature}, J. Diff. Geom. \textbf{30}
  (1989), 465--503.

\bibitem{mazzeopacardtubes}
R.~Mazzeo and F.~Pacard, \emph{Foliations by constant mean curvature tubes},
  Comm. Anal. Geom. \textbf{13} (2005), no.~4, 633--670.

\bibitem{mazzeosurvey}
Rafe Mazzeo, \emph{Recent advances in the global theory of constant mean
  curvature surfaces}, Noncompact problems at the intersection of geometry,
  analysis, and topology, Contemp. Math., vol. 350, Amer. Math. Soc.,
  Providence, RI, 2004, pp.~179--199.

\bibitem{meeks}
William~H. Meeks, III, \emph{The topology and geometry of embedded surfaces of
  constant mean curvature}, J. Differential Geom. \textbf{27} (1988), no.~3,
  539--552.

\bibitem{pacardsurvey}
Frank Pacard, \emph{Surfaces \`a courbure moyenne constante}, Image des
  math\'ematiques 2006, Publications of the CNRS, CNRS, Paris, 2006,
  pp.~107--112.

\bibitem{rosenberg}
Harold Rosenberg, \emph{Constant mean curvature surfaces in homogeneously
  regular 3-manifolds}, Bull. Austral. Math. Soc. \textbf{74} (2006), no.~2,
  227--238.

\bibitem{ye}
Rugang Ye, \emph{Foliation by constant mean curvature spheres}, Pacific J.
  Math. \textbf{147} (1991), no.~2, 381--396.

\end{thebibliography}
\bibliographystyle{amsplain}

\end{document}